\pgfplotsset{compat=1.10}
\newtheorem{theorem}{Theorem}
\newtheorem{definition}{Definition}
\newtheorem{proposition}{Proposition}
\newtheorem{lemma}{Lemma}
\newtheorem{remark}{Remark}
\newtheorem{standing}{Standing Assumption}
\newcommand{\R}{\mathbb{R}}
\newcommand{\Ncal}{\mathcal{N}}
\newcommand{\bOmega}{\boldsymbol{\Omega}}
\newcommand{\bomega}{\boldsymbol{\omega}}
\newcommand{\Xcal}{\mathcal{X}}
\newcommand{\bXcal}{\boldsymbol{\mathcal{X}}}
\newcommand{\mc}{\mathcal}
\newcommand{\boldx}{ \boldsymbol{x} }
\newcommand{\xMi}{ \boldsymbol{x}_{-i} }
\newcommand{\boldy}{ \boldsymbol{y} }
\newcommand{\boldz}{ \boldsymbol{z} }
\newcommand{\boldv}{ \boldsymbol{v} }
\newcommand{\bolds}{ \boldsymbol{\sigma} }
\newcommand{\col}{\mathrm{col}}
\newcommand{\nc}{\mathrm{N}}
\newcommand{\fix}{\mathrm{fix}}
\newcommand{\zer}{\mathrm{zer}}
\newcommand{\Id}{\mathrm{Id}}
\newcommand{\argmin}{\mathrm{argmin}}
\newcommand{\dom}{\mathrm{dom}}
\newcommand{\proj}{\mathrm{proj}}
\newcommand{\diag}{\mathrm{diag}}
\newcommand{\ellF}{\ell_{\text{F}}}
\newcommand{\bs}{\boldsymbol}
\newcommand{\norm}[1]{\left\|#1\right\|}
\newcommand{\Rmnum}[1]{\expandafter\@slowromancap\romannumeral #1@}
\begin{document}

\title{Projected-gradient algorithms for Generalized Equilibrium seeking in Aggregative Games are preconditioned Forward-Backward methods}

\author{Giuseppe Belgioioso \and Sergio Grammatico 
\thanks{G. Belgioioso is with the Control Systems group, TU Eindhoven, The Netherlands. S. Grammatico is with the Delft Center for Systems and Control (DCSC), TU Delft, The Netherlands. 
E-mail addresses: \texttt{g.belgioioso@tue.nl}, \texttt{s.grammatico@tudelft.nl}. This work was partially supported by the Netherlands Organisation for Scientific Research (NWO) under research projects OMEGA (grant n. 613.001.702) and P2P-TALES (grant n. 647.003.003).\smallskip
 \newline
}
}

\thispagestyle{empty}
\pagestyle{empty}

\maketitle

\begin{abstract}
We show that projected-gradient methods for the distributed computation of generalized Nash equilibria in aggregative games are preconditioned forward-backward splitting methods applied to the KKT operator of the game. Specifically, we adopt the preconditioned forward-backward design, recently conceived by Yi and Pavel in the manuscript ``A distributed primal-dual algorithm for computation of generalized Nash equilibria via operator splitting methods'' for generalized Nash equilibrium seeking in aggregative games. 
Consequently, we notice that two projected-gradient methods recently proposed in the literature are preconditioned forward-backward methods. More generally, we provide a unifying operator-theoretic ground to design projected-gradient methods for generalized equilibrium seeking in aggregative games.
\end{abstract}

\section{Introduction}
Aggregative game theory \cite{kukushkin:04} is a mathematical framework to model the interdependent optimal decision making problems for a set of noncooperative agents,
whenever the decision of each agent is affected by some aggregate effect of all the agents. This feature emerges in several application areas, such as demand side management in the smart grid \cite{Saad2012}, e.g. for electric vehicles \cite{parise:colombino:grammatico:lygeros:14, ma:zou:ran:shi:hiskens:16} and thermostatically controlled loads \cite{grammatico:gentile:parise:lygeros:15, Li2016}, demand response in competitive markets \cite{li2015demand} and network congestion control \cite{barrera:garcia:15}.

Existence and uniqueness of Nash equilibria in (aggregative) noncooperative games is well established in the literature of operation research \cite{facchinei2010generalized}, \cite[\S12]{palomar2010convex}, and automatic control \cite{pavel:07, kulkarni:shanbhag:12}. For the computation of a game equilibrium, several algorithms are available, both distributed protocols \cite{salehisadaghiani:pavel:16,koshal:nedic:shanbhag:16} and semi-decentralized schemes \cite{grammatico:parise:colombino:lygeros:16,paccagnan:16, grammatico:17, belgioioso:grammatico:17cdc}. Among these, an elegant approach is to characterize the desired equilibrium solutions as the zeros of an operator, possibly monotone, e.g.\ the concatenation of interdependent Karush--Kuhn--Tucker operators, and in turn formulate an equivalent fixed-point problem, which is solved via appropriate fixed-point iterations. Overall, the available methods can ensure global convergence to an equilibrium if the coupling among the cost functions of the agents is ``well behaved'', e.g.\ if the problem data are convex and the so-called pseudo-gradient game mapping is (strictly, strongly) monotone \cite{belgioioso2017convexity}.

A popular class of algorithms for Nash equilibrium seeking is that of projected-gradient algorithms \cite[\S 12]{facchinei:pang}, \cite{koshal:nedic:shanbhag:16,paccagnan:16,belgioioso2017convexity}. 
Whenever the pseudo-gradient game mapping is strongly monotone, projected-gradient algorithms can ensure fast convergence to a Nash equilibrium, possibly via distributed computation and information exchange. It follows that projected-gradient methods have the potential to be fast, simple and scalable with respect to the population size. At the same time, in the context of Nash equilibrium seeking, the convergence analyses for the available projected-gradient methods are quite diverse in nature.

In this paper, we aim at a unifying convergence analysis for projected-gradient algorithms that are adopted for the computation of generalized Nash equilibria in aggregative games. Specifically, we adopt a general perspective based on monotone operator theory \cite{bauschke2011convex} to show that projected-gradient algorithms with sequential updates belong to the class of preconditioned forward-backward splitting methods, introduced in \cite{yi2017distributed} for multi-agent network games.

The main technical contribution of the paper is to conceive a design procedure for the preconditioned forward-backward splitting method. The proposed design is based not only on the splitting of the monotone operator whose zeros are the game equilibria, but also on the choice of the so-called preconditioning matrix, which induces the quadratic norm adopted to show global convergence of the resulting algorithm. Since the convergence characterization of the forward-backward splitting method is well established, the advantage of the proposed design is that global convergence follows provided that some mild monotonicity assumptions on the problem data are satisfied.

Remarkably, we discover that two recent projected-gradient algorithms for Nash equilibrium seeking in aggregative games, \cite{paccagnan:16} and \cite{koshal:nedic:shanbhag:16}, can be equivalently written as preconditioned forward-backward splitting methods with symmetric preconditioning matrix, despite their algorithmic formulation is ``asymmetric''. 


\subsection*{Basic notation}
$\R$ denotes the set of real numbers, and $\overline{\R} := \R \cup \{\infty\}$ the set of extended real numbers. $\bs{0}$ ($\bs{1}$) denotes a matrix/vector with all elements equal to $0$ ($1$); to improve clarity, we may add the dimension of these matrices/vectors as subscript. $A \otimes B$ denotes the Kronecker product between matrices $A$ and $B$; $\left\| A \right\|$ denotes the maximum singular value of $A$; $\rm{eig}(A)$ denotes the set of eigenvalues of $A$.
Given $N$ vectors $x_1, \ldots, x_N \in \R^n$, $\boldsymbol{x} := \col\left(x_1,\ldots,x_N\right) = \left[ x_1^\top, \ldots , x_N^\top \right]^\top$.

\subsection*{Operator theoretic definitions}
$\Id(\cdot)$ denotes the identity operator. The mapping $\iota_{S}:\R^n \rightarrow \{ 0, \, \infty \}$ denotes the indicator function for the set $\mc{S} \subseteq \R^n$, i.e., $\iota_{S}(x) = 0$ if $x \in S$, $\infty$ otherwise. For a closed set $S \subseteq \R^n$, the mapping $\proj_{S}:\R^n \rightarrow S$ denotes the projection onto $S$, i.e., $\proj_{S}(x) = \argmin_{y \in S} \left\| y - x\right\|$. The set-valued mapping $\nc_{S}: \R^n \rightrightarrows \R^n$ denotes the normal cone operator for the the set $S \subseteq \R^n$, i.e., 
$\nc_{S}(x) = \varnothing$ if $x \notin S$, $\left\{ v \in \R^n \mid \sup_{z \in S} \, v^\top (z-x) \leq 0  \right\}$ otherwise.
For a function $\psi: \R^n \rightarrow \overline{\R}$, $\dom(\psi) := \{x \in \R^n \mid \psi(x) < \infty\}$; $\partial \psi: \dom(\psi) \rightrightarrows {\R}^n$ denotes its subdifferential set-valued mapping, defined as $\partial \psi(x) := \{ v \in \R^n \mid \psi(z) \geq \psi(x) + v^\top (z-x)  \textup{ for all } z \in {\rm dom}(\psi) \}$;
A set-valued mapping $\mathcal{F} : \R^n \rightrightarrows \R^n$ is $\ell$-Lipschitz continuous, with $\ell>0$, if $\|u-v\| \leq \ell\|x-y\|$ for all $x,y \in \R^n$, $u \in \mathcal{F} (x)$, $v \in \mathcal{F} (y)$; $\mathcal{F} $ is (strictly) monotone if $(u-v)^\top  (x-y) \geq (>) \, 0$ for all $x \neq y \in \R^n$, $u \in \mathcal{F} (x)$, $v \in \mathcal{F} (y)$; $\mathcal{F} $ is $\eta$-strongly monotone, with $\eta>0$, if 
$(u-v)^\top (x-y) \geq \eta \left\| x-y \right\|^2$ for all $x,y \in \R^n$, $u \in \mathcal{F} (x)$, $v \in \mathcal{F} (y)$; $\mathcal{F} $ is $\eta$-{averaged}, with $\eta \in (0,1)$, if  
$\left\| \mathcal{F} (x) - \mathcal{F} (y) \right\|^2 \leq \left\| x-y \right\|^2 - \tfrac{1-\eta}{\eta}\left\| \left( \textup{Id}-\mathcal{F}  \right)(x) - \left(\textup{Id}-\mathcal{F}  \right)(y) \right\|^2$, for all $x, y \in \R^n$;
$\mathcal{F} $ is $\beta$-cocoercive, with $\beta>0$, if $\beta \mathcal{F} $ is $\tfrac{1}{2}$-averaged.
With ${\rm J}_{\mathcal{F} }:=(\Id + \mathcal{F} )^{-1}$, we denote the resolvent operator of $\mathcal{F} $, which is $\tfrac{1}{2}$-averaged if and only if $\mathcal{F} $ is monotone; $\fix\left( \mathcal{F}\right) := \left\{ x \in \R^n \mid x \in \mathcal{F}(x) \right\}$ and $\zer\left( \mathcal{F}\right) := \left\{ x \in \R^n \mid 0 \in \mathcal{A}(x) \right\}$ denote the set of fixed points and of zeros, respectively.

\section{Generalized aggregative games} \label{sec:GAG}

\subsection{Mathematical formulation}
We consider a set of $N$ noncooperative agents, where each agent $i \in \Ncal := \{1,\cdots, N \}$ shall choose its decision variable (i.e., strategy) $x_i$ from the local decision set $\Omega_i \subseteq \mathbb{R}^n$ with the aim of minimizing its local cost function $\left( x_i, \bs{x}_{-i} \right) \mapsto J_i\left( x_i, \bs{x}_{-i} \right): \R^n \times \R^{n(N-1)} \rightarrow \overline{\R}$, which depends on both the local variable $x_i$ (first argument) and on the decision variables of the other agents, $\bs{x}_{-i} = \col\left( \{ x_j \}_{j \neq i} \right)$ (second argument).

We focus on the class of aggregative games, where the cost function of each agent depends on the local decision variable and on the value of the aggregation function $\sigma: \boldsymbol{\Omega} \rightarrow \frac{1}{N} \sum_{j=1}^N \Omega_j \subseteq \mathbb{R}^n$, with $\boldsymbol{\Omega} := \Omega_1 \times \ldots \times \Omega_N$. In particular, we consider average aggregative games, where the aggregation function is the average function, i.e., 
\begin{equation} \label{eq:sigma} \textstyle
\sigma(\boldsymbol{x}) := M \boldx = \frac{1}{N}\sum_{i=1}^{N} x_i, \text{ hence } M := \frac{1}{N}\mathbf{1}^\top_N \otimes I_n.
\end{equation}

Thus, for each $i \in \Ncal$, there is a function $f_i : \R^n \times \R^n \rightarrow \overline{\R}$ such that the local cost function $J^i$ can be written as
\begin{equation} \label{eq:CFi} \textstyle
J_i(x_i, \xMi) =: \textstyle  f_i \left( x_i, \sigma(\boldsymbol{x}) \right). 
\end{equation}

Furthermore, we consider \textit{generalized games}, where the coupling among the agents arises not only via the cost functions, but also via their feasible decision sets. In our setup, the coupling constraints are described by an affine function, $x \mapsto A \boldx - b$, where $A \in \R^{m \times nN}$ and $b \in \R^m$. Thus, the collective feasible set, $\bXcal \subseteq \R^{nN}$, reads as
\begin{equation}
\label{eq:G}
\bXcal \:= \boldsymbol{\Omega}  \cap \left\{ \boldy \in \R^{nN} | \, A \boldy - b \leq \boldsymbol{0}_m \right\};
\end{equation}

while the feasible decision set of each agent $i \in \Ncal$ is characterized by the set-valued mapping $\Xcal_i$,
defined as
$
\Xcal_i(\xMi) := \big\{ y_i \in \Omega_i | \, A_i y_i \leq b -\sum_{j \neq i}^N A_j x_j \big\},
$
where $A_i \in \R^{m \times n}$ and $A = \left[ A_1, \ldots, A_N \right]$.
The set $\Omega_i$ represents the local decision set for agent $i$, while the matrix $A_i$ defines how agent $i$ is involved in the coupling constraints. For instance, the shared constraints in \eqref{eq:G} may contain a sparsity pattern that can be defined via a graph, where each agent has a set of ``neighbors'' with whom to share some constraints.
\begin{remark}[Affine coupling constraint]
Affine coupling constraints as considered in this paper are very common in the literature of noncooperative games, see \cite{paccagnan:16},
\cite{grammatico:17}, \cite{yi2017distributed}, \cite{liang2017distributed}.
Moreover, we recall that the more general case with separable convex coupling constraints can be reformulated as game via affine coupling constraints \cite[Remark 2]{grammatico:17tcns}.
{\hfill $\square$}
\end{remark}

\smallskip
Next, let us postulate standard convexity and compactness assumptions for the constraint sets, convexity and differentiability assumptions for the local cost functions.

\smallskip
\begin{standing}[Convex differentiable functions] \label{ass:CCF}
For each $i\in \Ncal$ and $\boldsymbol{y} \in \boldsymbol{\mathcal{X}}_{-i}$
the function $J_i\left( \, \cdot \, , \, \boldsymbol{y} \right)$ is convex and continuously differentiable.
{\hfill $\square$}
\end{standing} 

\smallskip
\begin{standing}[Compact convex constraints] \label{ass:CS}
For  each $i \in \Ncal$, the set $\Omega_i$ is nonempty, compact and convex. The set $\bXcal$ satisfies the Slater's constraint qualification.
{\hfill $\square$}
\end{standing}
\smallskip

In summary, the aim of each agent $i$, given the decision variables of the other agents, $\xMi$, is to choose a strategy, $x_i$, that solves its local optimization problem according to the game setup previously described, i.e., 
\begin{align}\label{eq:Game}
\left\{
\begin{matrix}
\underset{x_i \in \, \Omega_i}{\operatorname{min}}& \; 
J_i \big( x_i, \xMi \big)   \hspace*{3.8em}  \\
\text{ s.t. }   &   A_i x_i \leq b-\sum_{j \neq i}^N A_j x_j
\end{matrix} \qquad
\forall i\in \Ncal.
\right.
\end{align}
From the game-theoretic perspective, we consider the problem to compute a Nash equilibrium, as formalized next.

\smallskip
\begin{definition}[Generalized Nash equilibrium]
The collective strategy $\boldx^*$ is a generalized Nash equilibrium (GNE) of the game in \eqref{eq:Game} if $\boldx^* \in \bXcal$ and for all $i\in \Ncal$
\begin{align*}
J_i\left( x^*_{i}, \boldx^*_{-i} \right) \leq 
\inf\left\{ J_{i}(y, \, \boldx^*_{-i}) \, \mid \, y \in \Xcal_i(\boldx^*_{-i}) \right\}.
\end{align*}
{\vspace*{-2em \hfill $\square$}}
\end{definition}
\medskip

In other words, a set of strategies is a Nash equilibrium if no agent can improve its objective function by unilaterally changing its strategy to another feasible one.

Under Assumptions \ref{ass:CCF}$-$\ref{ass:CS}, the existence of a GNE of the game in \eqref{eq:Game} follows from Brouwer's fixed-point theorem \cite[Proposition~12.7]{palomar2010convex}, while uniqueness does not hold in general.

\subsection{Variational equilibria and pseudo-gradient of the game} \label{subsec:VA}

Within all the possible Nash equilibria, we focus on an important subclass of equilibria, with some relevant structural properties, such as ``larger social stability'' and ``economic fairness'' \cite[Theorem 4.8]{facchinei2010generalized}, that corresponds to the solution set of an appropriate variational inequality. Let us first formalize the notion of variational inequality problem.

\smallskip
\begin{definition}[Generalized variational inequality]
Consider a closed convex set $S \subseteq \R^n$, 
a set-valued mapping $\Psi:S \rightrightarrows \R^n$, and a single-valued mapping $\psi: S \rightarrow \R^n$. 
The generalized variational inequality problem GVI$(S,\Psi)$, is the problem to find $x^* \in S$ and $g^* \in \Psi(x^*)$ such that
$$ \textstyle
(x-x^*)^\top \, g^* \geq 0 \ \textup{  for all } x\in S.
$$
If $\Psi (x) = \{\psi(x)\}$ for all $x\in S$, then GVI$(S,\Psi)$ reduces to the variational inequality problem VI$(S,\psi)$.
\end{definition}
\smallskip

A fundamental mapping in a noncooperative game is the so-called \textit{pseudo-gradient}, $F: \bXcal \rightrightarrows \R^{nN}$, defined as
\begin{align} 
\label{eq:PsGr}
F(\boldx) &:= 
\col\left( \left\{  \partial_{x_i} \, J_i \left( x_i, \, \boldsymbol{x}_{-i} \right) \right\}_{i \in \mathcal{N}} \right). 
\end{align}
Namely, the mapping $F$ is obtained by stacking together the subdifferentials of the agents' objective functions with respect to their local decision variables.

Under Assumptions \ref{ass:CCF}$-$\ref{ass:CS}, it follows by \cite[Proposition 12.4]{palomar2010convex} that any solution of GVI$(\bXcal,F)$ is a Nash equilibrium of the game in \eqref{eq:Game}. The inverse implication is not true in general, and actually in passing from the Nash equilibrium problem to the GVI problem most solutions are lost \cite[\S 12.2.2]{palomar2010convex};  indeed, a game may have a Nash equilibrium while the corresponding GVI has no solution. 
Note that, since the cost functions are differentiable (by Assumption \ref{ass:CS}), then GVI$(\bXcal,F)$ reduces to VI$(\bXcal,F)$, which is commonly addressed in the context of game theory via projected gradient algorithms \cite{koshal:nedic:shanbhag:16,paccagnan:16,belgioioso2017convexity}, \cite[\S 12]{facchinei:pang}.

Under the postulated standing assumptions, it is shown in \cite[Proposition~12.11]{palomar2010convex} that a sufficient condition for the existence (and uniqueness) of a variational GNE of the game in \eqref{eq:Game} is the (strict) monotonicity of the pseudo-gradient $F$ in \eqref{eq:PsGr}. 
Thus, let us assume strongly monotonicity of $F$.
\begin{standing}[Strong monotonicity] \label{ass:mon}
The pseudo-gradient $F$ in \eqref{eq:PsGr} is $\eta$-strongly monotone and $\ell_{\text{F}}$-Lipschitz continuous, for some constants $\eta, \, \ell_{\text{F}} > 0$.
 {\hfill $\square$}
\end{standing}

\smallskip
\section{Generalized Nash equilibrium as zero of the sum of two monotone operators} \label{sec:MonRef}

In this section, we exploit operator theory to recast the Nash equilibrium seeking problem into a monotone inclusion, namely, the problem of finding a zero of a set-valued monotone operator.
As first step, we characterize a GNE of the game in terms of KKT conditions of the coupled optimization problems in \eqref{eq:Game}. For each agent $i \in \mathcal{N}$, let us introduce the Lagrangian function $L_i$, defined as 
\begin{equation*}
L_i(\boldx,\lambda_i) := J_i(x_i,\boldx_{-i})+ \iota_{\Omega_i}(x_i) +\lambda_i^\top (A \boldx-b),
\end{equation*}
where $\lambda_i \in \R^m_{\geq 0}$ is the Lagrangian multiplier associated with the coupling constraints.
It follows from \cite[\S 12.2.3]{palomar2010convex} that the set of strategies $\boldx^*$ is a GNE of the game in \eqref{eq:Game} if and only if the following coupled KKT conditions are satisfied:
\begin{equation} \label{eq:KKT}
\begin{cases}
0 \in \partial_{x_i} J_i(x_i^*,\boldx^*_{-i}) + \nc_{\Omega_i}({x}^*_i) + A_i^\top \lambda_i\\
0 \leq \lambda_i \perp -(A {\boldx}^*-b) \geq 0
\end{cases}  \forall i \in \Ncal
\end{equation}

The constraint qualification in Assumption \ref{ass:CS} is needed at this stage to ensures boundedness of the dual variables $\lambda_i$'s.

In a similar fashion, we characterize a variational GNE in term of KKT conditions by exploiting the Lagrangian duality scheme for the corresponding VI problem, see \cite[\S 3.2]{auslender2000lagrangian}. Specifically, ${\boldx}^*$ is a solution of VI$(\bXcal, F)$ if and only if 
${\boldx}^* \in {\argmin}_{\boldy \in \bXcal} \,(\boldy - {\boldx}^*)^\top F(
{\boldx}^*).$
Then, the associated KKT optimality conditions read as
\begin{align} \label{eq:VI-KKT}
\begin{cases}
0 \in \partial_{x_i} J_i({x}^*_i,{\boldx}^*_{-i}) + \nc_{\Omega_i}({x}^*_i) + A_i^\top \mu, 
\ \forall i \in \Ncal
\\
0 \leq \mu \perp -(A {\boldx}^*-b) \geq 0.
\end{cases}
\end{align} 
  
To cast \eqref{eq:VI-KKT} in compact form, we introduce the set-valued mapping $T: \bXcal \times \R^{m}_{\geq 0} \rightrightarrows \R^{nN} \times \R^{m}$, defined as
\begin{align} \label{eq:T}
T: 
\begin{bmatrix}
\boldx\\
\mu
\end{bmatrix}
\mapsto
\begin{bmatrix}
\nc_{\bOmega}(\boldx)+F(\boldx)  + A^\top \mu \\
\nc_{\R^{m}_{\geq 0}}(\mu) - (A \boldx - b)
\end{bmatrix},
\end{align}

Essentially, the role of the mapping $T$ is that its zeros correspond to the variational generalized Nash equilibria of the game in \eqref{eq:Game}, as formalized in the next statement.

\smallskip
\begin{proposition}[{\rm\cite[Th.~1]{belgioioso:grammatico:17cdc}}]
The collective strategy ${\boldx}^*$ is a variational GNE of the game in \eqref{eq:Game} if and only if there exists $\mu^* \in \R^m_{\geq 0}$ such that $\col({\boldx}^*,{\mu}^*) \in \zer\left(T\right)$. Moreover, if $\col({\boldx}^*,{\mu}^*) \in \zer\left(T\right)$, then ${\boldx}^*$ satisfies the KKT conditions in \eqref{eq:KKT} with Lagrangian multipliers $\lambda_i = {\mu}^*$ for all $i \in \Ncal$.
{\hfill $\square$}
\end{proposition}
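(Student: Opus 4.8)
The plan is to derive the stated equivalence by composing three elementary facts. \emph{First}, by definition a variational GNE of \eqref{eq:Game} is a solution of GVI$(\bXcal,F)$, and since $F$ is single-valued under Assumption~\ref{ass:CCF} (differentiability of the $J_i$'s), GVI$(\bXcal,F)$ reduces to VI$(\bXcal,F)$. \emph{Second}, I would fix $\boldx^*$ and observe that $\boldx^*$ solves VI$(\bXcal,F)$ if and only if it is a minimizer of the convex program $\min_{\boldy\in\bXcal}(\boldy-\boldx^*)^\top F(\boldx^*)$, which has an affine objective over $\bXcal=\bOmega\cap\{\boldy\mid A\boldy-b\leq\boldsymbol 0_m\}$. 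Since $\bXcal$ satisfies Slater's constraint qualification (Assumption~\ref{ass:CS}), strong duality applies and $\boldx^*$ is optimal if and only if there exists $\mu^*\in\R^m_{\geq0}$ with $0\in F(\boldx^*)+\nc_{\bOmega}(\boldx^*)+A^\top\mu^*$ together with the complementarity $0\leq\mu^*\perp-(A\boldx^*-b)\geq0$; using the block structure $\nc_{\bOmega}=\col(\{\nc_{\Omega_i}\}_{i\in\Ncal})$, the partition $A=[A_1,\dots,A_N]$, and the definition \eqref{eq:PsGr} of $F$, this is precisely the KKT system \eqref{eq:VI-KKT}. \emph{Third}, I would rewrite the complementarity line as the inclusion $0\in\nc_{\R^m_{\geq0}}(\mu^*)-(A\boldx^*-b)$ — the standard normal-cone form of a complementarity condition over the nonnegative orthant — and stack the two lines of \eqref{eq:VI-KKT}; the result is exactly $0\in T(\col(\boldx^*,\mu^*))$ with $T$ as in \eqref{eq:T}, i.e.\ $\col(\boldx^*,\mu^*)\in\zer(T)$.

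For the ``moreover'' part I would simply compare \eqref{eq:VI-KKT} with \eqref{eq:KKT}: setting $\lambda_i:=\mu^*$ for every $i\in\Ncal$ makes the stationarity lines of the two systems identical, while the single complementarity condition $0\leq\mu^*\perp-(A\boldx^*-b)\geq0$ reproduces, for each $i$, the complementarity condition appearing in \eqref{eq:KKT}. Hence any $\col(\boldx^*,\mu^*)\in\zer(T)$ yields a KKT point of the game in the sense of \eqref{eq:KKT}, with all agents sharing the common multiplier $\mu^*$.

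The step I expect to be the main obstacle is the ``second'' one, namely the passage from VI$(\bXcal,F)$ to its KKT system: one has to justify that, at a feasible $\boldx^*$, the normal cone $\nc_{\bXcal}(\boldx^*)$ splits into the contribution $\nc_{\bOmega}(\boldx^*)$ of the local constraints and a contribution $A^\top\mu^*$, with $\mu^*\in\R^m_{\geq0}$ carrying the multiplier of the affine coupling constraint and the attendant complementarity. This is exactly where Slater's constraint qualification is used, and where one invokes the Lagrangian-duality framework for (generalized) variational inequalities, see \cite[\S3.2]{auslender2000lagrangian}. Everything else — the reduction of GVI to VI, the identification of the VI with the first-order optimality condition of a linear program, and the bookkeeping with the block-diagonal normal cone $\nc_{\bOmega}$ and the partition $A=[A_1,\dots,A_N]$ — is routine.
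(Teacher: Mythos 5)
Your proposal is correct and follows essentially the same route as the paper: the paper (citing \cite[Th.~1]{belgioioso:grammatico:17cdc}) likewise characterizes the variational GNE as the solution of VI$(\bXcal,F)$, passes to the KKT system \eqref{eq:VI-KKT} via the Lagrangian duality scheme of \cite[\S 3.2]{auslender2000lagrangian} under Slater's constraint qualification, and then recasts the complementarity condition as the normal-cone inclusion $0\in\nc_{\R^m_{\geq0}}(\mu^*)-(A\boldx^*-b)$ to obtain $\col(\boldx^*,\mu^*)\in\zer(T)$. You also correctly isolate the one nontrivial step (the splitting of $\nc_{\bXcal}$ into $\nc_{\bOmega}+A^\top\mu^*$, which is exactly where Slater's condition is needed), and the ``moreover'' part by direct comparison of \eqref{eq:VI-KKT} with \eqref{eq:KKT} is the same observation made in the paper.
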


To conclude this section, we note that the mapping $T$ can be written as the sum of two operators, defined as
\begin{align}
\label{eq:U}
\mc{A}:
\begin{bmatrix}
\boldx\\
\lambda
\end{bmatrix}
&
\mapsto
\begin{bmatrix}
F(\boldx) \\
b
\end{bmatrix},\\
\label{eq:B}
\mathcal{B}:
\begin{bmatrix}
\boldx\\
\lambda
\end{bmatrix}
&
\mapsto
\begin{bmatrix}
\nc_{\bOmega}(\boldx) \\
\nc_{\R^{m}_{\geq 0}}(\mu)
\end{bmatrix} +
\begin{bmatrix}
0 & A^\top \\
-A & 0
\end{bmatrix}
\begin{bmatrix}
\boldx \\
\lambda
\end{bmatrix}.
\end{align}
The formulation $\mc{T} = \mc{A} + \mathcal{B}$ is called \textit{splitting} of $\mc{T}$, and will be exploited in different ways later on. We show next that the mappings $\mc{A}$ and $\mc{B}$ are both monotone, which paves the way for splitting algorithms.

\begin{lemma} \label{lem:U-Bmon}
The mapping $\mathcal{B}$ in \eqref{eq:B} is maximally monotone and $\mc{A}$ in \eqref{eq:U} is $(\eta/\ell_{\text{F}}^2)$-cocoercive.
{\hfill $\square$}
\end{lemma}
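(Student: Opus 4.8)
The plan is to treat the two claims separately, since they rely on different mechanisms. For the operator $\mathcal{B}$ in \eqref{eq:B}, I would write it as the sum of two pieces: the normal-cone block $\boldsymbol{x} \mapsto \nc_{\bOmega}(\boldsymbol{x})$, $\mu \mapsto \nc_{\R^m_{\geq 0}}(\mu)$, and the skew-symmetric linear block with matrix $\left[\begin{smallmatrix} 0 & A^\top \\ -A & 0 \end{smallmatrix}\right]$. The normal-cone part is maximally monotone because $\bOmega$ and $\R^m_{\geq 0}$ are nonempty closed convex sets, so $\nc_{\bOmega} = \partial \iota_{\bOmega}$ and $\nc_{\R^m_{\geq 0}} = \partial \iota_{\R^m_{\geq 0}}$ are subdifferentials of proper lower-semicontinuous convex functions, hence maximally monotone (this is standard, e.g.\ via Rockafellar's theorem on subdifferentials). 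The linear skew block is monotone because $\boldsymbol{v}^\top \left[\begin{smallmatrix} 0 & A^\top \\ -A & 0 \end{smallmatrix}\right] \boldsymbol{v} = 0$ for all $\boldsymbol{v}$, and it is single-valued with full domain $\R^{nN}\times\R^m$, hence maximally monotone. Maximal monotonicity is preserved under sum when one summand has full domain (or, more carefully, when the domains satisfy a standard qualification such as $\operatorname{int}\dom \cap \dom \neq \varnothing$), so $\mathcal{B}$ is maximally monotone.

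For $\mathcal{A}$ in \eqref{eq:U}, the constant component $\lambda \mapsto b$ contributes nothing to monotonicity or cocoercivity (differences vanish), so the relevant object is the map $\boldsymbol{x}\mapsto F(\boldsymbol{x})$ acting on the first block. By Standing Assumption \ref{ass:mon}, $F$ is $\eta$-strongly monotone and $\ellF$-Lipschitz. I would invoke the classical fact that a single-valued mapping that is $\eta$-strongly monotone and $\ellF$-Lipschitz is $(\eta/\ellF^2)$-cocoercive: for any $\boldsymbol{x},\boldsymbol{y}$,
\begin{equation*}
(F(\boldsymbol{x})-F(\boldsymbol{y}))^\top(\boldsymbol{x}-\boldsymbol{y}) \geq \eta\|\boldsymbol{x}-\boldsymbol{y}\|^2 \geq \tfrac{\eta}{\ellF^2}\|F(\boldsymbol{x})-F(\boldsymbol{y})\|^2,
\end{equation*}
where the last inequality uses $\|F(\boldsymbol{x})-F(\boldsymbol{y})\| \leq \ellF\|\boldsymbol{x}-\boldsymbol{y}\|$. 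Since appending the constant block $b$ and the pass-through of the $\lambda$-coordinate (which does not appear in the output) only adds zero terms on both sides of the cocoercivity inequality, $\mathcal{A}$ inherits $(\eta/\ellF^2)$-cocoercivity on the product space.

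The main obstacle is the bookkeeping for $\mathcal{B}$: one must be careful that maximal monotonicity of the block-diagonal normal-cone operator on the product space $\R^{nN}\times\R^m$ is correctly assembled from the maximal monotonicity of $\nc_{\bOmega}$ and $\nc_{\R^m_{\geq 0}}$ on the respective factors, and that adding the skew-symmetric bounded linear operator preserves maximality — this is where a domain-qualification argument is needed, though here it is immediate since the linear part is everywhere defined. The cocoercivity claim for $\mathcal{A}$ is essentially a one-line consequence of Standing Assumption \ref{ass:mon} and causes no difficulty.
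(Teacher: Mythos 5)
Your proposal is correct and follows essentially the same route as the paper: the same splitting of $\mathcal{B}$ into a block-diagonal normal-cone operator plus a skew-symmetric linear map (each maximally monotone, with the sum rule applying because the linear part has full domain), and the same two-inequality chain using $\eta$-strong monotonicity and $\ellF$-Lipschitz continuity of $F$ to get $(\eta/\ellF^2)$-cocoercivity of $\mathcal{A}$, noting that the constant block $b$ contributes nothing. No gaps.
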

\begin{proof}
First, consider $\mathcal{B}=\mathcal{B}_1+\mathcal{B}_2$ in \eqref{eq:B}. The first term $\mathcal{B}_1$ is maximally monotone, since normal cones of closed convex sets are maximally monotone and the concatenation preserves maximality \cite[Prop.~20.23]{bauschke2011convex}; the second term $\mathcal{B}_2$ is linear and skew symmetric, i.e., $\mathcal{B}_2^\top = - \mathcal{B}_2$, thus maximally monotone \cite[Ex.~20.30]{bauschke2011convex}. Then, the maximal monotonicity of $\mathcal{B}$ follows from \cite[Cor.~24.4]{bauschke2011convex}, since $\dom \mathcal{B}_2 = \R^{nN+m}$. 
To prove that $\mc{A}$ is $(\eta /\ell_{\text{F}}^2)$-cocoercive, we note that for all
$\bomega_1=\col(\boldx_1,\lambda_2), \bomega_2= \col(\boldx_2,\lambda_2)\in \R^{nN+m}$, it holds that
\begin{multline*} \textstyle 
 \langle \mc{A}(\bomega_1)- \mc{A}(\bomega), \bomega_1- \bomega_2 \rangle
= \langle F(\boldx_1)- F(\boldx_2), \boldx_1- \boldx_2 \rangle \\
\textstyle
\geq \eta \norm{\boldx_1-\boldx_2}^2  \geq \frac{\eta}{\ell_{\text{F}}^2} \norm{F(\boldx_1) -F(\boldx_2)}^2 \\
\textstyle
 = \frac{\eta}{\ell_{\text{F}}^2} \norm{\mc{A}(\bomega_1)- \mc{A}(\bomega_2)}^2.
\end{multline*}
The first and second inequalities follow from the $\eta$-strong monotonicity and $\ell_{\text{F}}$-Lipschitz continuity, respectively, of the mapping $F$, postulated in Assumption \ref{ass:mon}.
\end{proof}

\section{Preconditioned Forward-Backward splitting} \label{sec.IFBS}
In light of Lemma \ref{lem:U-Bmon}, the forward-backward (FB) splitting \cite[\S 25.3]{bauschke2011convex} guarantees convergence to a zero of $\mc{A}+\mc{B}$. In this section, we discuss a design procedure for FB algorithms, which is particularly useful when the resolvent of the operator $\mathcal{B}$ cannot be computed explicitly. Moreover, we show that two existing algorithms for GNE seeking in aggregative games belong to this class of algorithms.

\subsection{Preconditioned Forward-Backward: Design Procedure} \label{sec:IFB}
The main idea of the FB splitting is that the zeros of the mapping $T$ in \eqref{eq:T} correspond to the fixed points of a certain operator which depends on the chosen splitting \eqref{eq:B}$-$\eqref{eq:U}, as formalized next.

\smallskip
\begin{lemma}
For any matrix $\Phi \succ 0$, the following equivalence holds:
\begin{align}
\bomega \in \zer (\mc{A}+\mathcal{B}) \Leftrightarrow \bomega \in \fix \, ( \mathcal{V}_\Phi \circ  \mathcal{U}_\Phi), \label{eq:FB}
\end{align}
where $\mathcal{U}_\Phi := (\Id-\Phi^{-1}\mc{A})$ and $\mathcal{V}_\Phi := (\Id + \Phi^{-1}\mathcal{B})^{-1}$.
{\hfill $\square$}
\end{lemma}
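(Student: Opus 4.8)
The plan is to establish the equivalence by a direct chain of ``if and only if'' manipulations of the defining inclusions, using only that $\Phi \succ 0$ makes $\Phi^{-1}$ a linear bijection and that $\mc{A}$ in \eqref{eq:U} is single-valued (the latter follows from Assumption~\ref{ass:mon}, since an $\ell_{\textup{F}}$-Lipschitz set-valued mapping must be single-valued, consistently with how $\mc{A}$ is treated in Lemma~\ref{lem:U-Bmon}). No topological or monotonicity input is actually needed for the equivalence itself; it is pure operator bookkeeping.

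First I would unfold the left-hand side: $\bomega \in \zer(\mc{A}+\mathcal{B})$ means $0 \in \mc{A}(\bomega) + \mathcal{B}(\bomega)$, which (since $\mc{A}(\bomega)$ is a single vector) is the same as $-\mc{A}(\bomega) \in \mathcal{B}(\bomega)$. Because $\Phi^{-1}$ is invertible, left-multiplying both sides by $\Phi^{-1}$ is an equivalence in both directions, giving $-\Phi^{-1}\mc{A}(\bomega) \in \Phi^{-1}\mathcal{B}(\bomega)$, where $\Phi^{-1}\mathcal{B}(\bomega) := \{\Phi^{-1} v \mid v \in \mathcal{B}(\bomega)\}$. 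Adding $\bomega$ to both sides and recognizing $\Id - \Phi^{-1}\mc{A} = \mathcal{U}_\Phi$ on the left and $\Id + \Phi^{-1}\mathcal{B}$ on the right, this becomes $\mathcal{U}_\Phi(\bomega) \in (\Id + \Phi^{-1}\mathcal{B})(\bomega)$.

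The last step invokes the definition of the inverse of a set-valued mapping: $y \in (\Id + \Phi^{-1}\mathcal{B})(\bomega)$ is equivalent to $\bomega \in (\Id + \Phi^{-1}\mathcal{B})^{-1}(y) = \mathcal{V}_\Phi(y)$. Specializing to $y = \mathcal{U}_\Phi(\bomega)$ — legitimate since $\mathcal{U}_\Phi$ is single-valued — yields $\bomega \in \mathcal{V}_\Phi\!\left(\mathcal{U}_\Phi(\bomega)\right) = (\mathcal{V}_\Phi \circ \mathcal{U}_\Phi)(\bomega)$, i.e., $\bomega \in \fix(\mathcal{V}_\Phi \circ \mathcal{U}_\Phi)$. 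Since every implication in the chain is reversible, reading it backwards proves the converse, completing \eqref{eq:FB}.

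I do not expect a genuine obstacle here; the only points that need care are (i) keeping each step a true equivalence rather than a one-way implication while $\mathcal{B}$ is set-valued — this is precisely where $\Phi \succ 0$ enters, since the bijectivity of $\Phi^{-1}$ lets one move multiplication by $\Phi^{-1}$ into and out of set membership without loss — and (ii) noting, for the composition $\mathcal{V}_\Phi \circ \mathcal{U}_\Phi$ to define a bona fide iteration, that $\mathcal{V}_\Phi$ is single-valued with full domain, which follows from Lemma~\ref{lem:U-Bmon} because $\Phi^{-1}\mathcal{B}$ is maximally monotone in the $\Phi$-weighted inner product; this last remark is useful context but is not required for the stated equivalence.
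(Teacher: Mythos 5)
Your proposal is correct and follows essentially the same route as the paper: unfold $0 \in (\mc{A}+\mathcal{B})(\bomega)$, multiply through by $\Phi^{-1}$ (an equivalence because $\Phi \succ 0$), add $\bomega$ to both sides, and pass to the resolvent-type inverse to read off the fixed-point characterization. Your extra remarks on the single-valuedness of $\mc{A}$ and $\mathcal{V}_\Phi$ are sound but not needed for the equivalence, exactly as you note.
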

\begin{proof} Consider a vector $\bomega \in \R^{nN+m}$, then
\begin{align*} \textstyle \nonumber
\mathbf{0} \in (\mc{A}+\mathcal{B})(\boldsymbol{\omega})
& \Leftrightarrow \mathbf{0} \in \Phi^{-1}(\mc{A}+\mathcal{B})(\boldsymbol{\omega})\\
\nonumber \textstyle
& \Leftrightarrow (\Id-\Phi^{-1}\mc{A})(\boldsymbol{\omega}) \in 
(\Id + \Phi^{-1}\mathcal{B})(\boldsymbol{\omega})\\
\textstyle
& \Leftrightarrow \boldsymbol{\omega} = \mathcal{V}_\Phi \circ \mathcal{U}_\Phi \; (\boldsymbol{\omega}),
\end{align*}
where the first equivalence holds since $\Phi^{-1} \succ 0$.
\end{proof}
\smallskip

The FB algorithm is the Banach--Picard iteration \cite[(1.67)]{bauschke2011convex} applied to the mappings $\mc{V}_\Phi \circ \mc{U}_{\Phi}$  in \eqref{eq:FB}, i.e., 
\begin{align} \label{eq:P-B}
\boldsymbol{\omega}^{k+1} = (\Id + \Phi^{-1}\mathcal{B})^{-1} \circ (\Id-\Phi^{-1}\mc{A})(\boldsymbol{\omega}^k).
\end{align}
In numerical analysis, $\mathcal{U}_\Phi$ represents a \textit{forward} step with size and direction defined by $\Phi$, while $\mathcal{V}_\Phi$ represents a \textit{backward} step.
Directly from the iteration in \eqref{eq:P-B}, we have that 
\begin{align}
(\Id-\Phi^{-1}\mc{A})(\boldsymbol{\omega}^k) &\in (\Id + \Phi^{-1}\mathcal{B}) (\boldsymbol{\omega}^{k+1})
\Leftrightarrow \nonumber \\[.1cm]
\label{eq:DesEq}
- \mc{A} (\boldsymbol{\omega}^k) &\in \mathcal{B} (\boldsymbol{\omega}^{k+1}) + \Phi (\boldsymbol{\omega}^{k+1}-\boldsymbol{\omega}^k).
\end{align}

The choice of the preconditioning matrix $\Phi$ in \eqref{eq:DesEq} plays a key role in the algorithm design. Next, we provide some general guidelines to design $\Phi$.

\begin{center}
\begin{minipage}{\columnwidth}
\hrule
\smallskip
\textit{Design guidelines for the preconditioning matrix $\Phi$}: 
\smallskip
\hrule
\medskip
\begin{enumerate}[$1.$]
\item $\forall \xi \in \rm{eig}(\Phi)$,
$ \Re \left[ \xi \right] > 0$ (necessary);
\item $\bomega^{k+1}$ in \eqref{eq:DesEq} explicitly computable (necessary);
\item $\Phi = \Phi^\top$ (convenient convergence analysis);
\item iterations in \eqref{eq:P-B} sequential (convenient implementation).
\end{enumerate}
\hrule
\end{minipage}
\end{center}
Without loss of generality, we denote $\boldsymbol{\omega} = \col(\boldx, \lambda)$, then the inclusion in \eqref{eq:DesEq} reads in expanded form as
\begin{multline} \label{eq:DesignMech} \textstyle
- \begin{bmatrix}
F(\boldx^k)\\ b
\end{bmatrix} \in
\begin{bmatrix}
\nc_{\Omega}(\boldx^{k+1})\\
\nc_{\R^m_{\geq 0}}(\lambda^{k+1})
\end{bmatrix}
+
\begin{bmatrix}
 0 & A^\top\\
-A & 0\\ 
\end{bmatrix}
\begin{bmatrix}
\boldx^{k+1} \\
\lambda^{k+1}
\end{bmatrix}
\\ \textstyle
+
\begin{bmatrix}
\Phi_{11} & \Phi_{12}  \\
\Phi_{21} & \Phi_{22} 
\end{bmatrix}
\begin{bmatrix}
\boldx^{k+1}- \boldx^k \\
\lambda^{k+1}-\lambda^k
\end{bmatrix}.
\end{multline}

Consider a symmetric matrix $\Phi_{\text s}$, designed accordingly to the guidelines above, i.e.,
\begin{align} \label{eq:Phi_s} \textstyle
\Phi_{\text s} :&= \begin{bmatrix}
\boldsymbol{\alpha}^{-1} & -A^\top \\
-A & \gamma^{-1}I_m
\end{bmatrix},
\end{align}
where $\boldsymbol{\alpha} := \diag(\alpha_1,\cdots,\alpha_N) \otimes I_n$ and the coefficients (step sizes) $\{ \alpha_i \}_{i=1}^N$ and $\gamma$ are chosen such that $\Phi_{\text s}$ has positive eigenvalues (guideline 1), as formalized in the next lemma.

\smallskip
\begin{lemma} \label{lem:Phi_s}
The matrix $\Phi_{\text s}$ in \eqref{eq:Phi_s} is positive definite if
\begin{align} \textstyle \label{eq:defPos}
\gamma < (\norm{A}^2  \alpha_{i,\max})^{-1}, \quad \gamma, \, \alpha_{i,\min} > 0,
\end{align}
where $\alpha_{i,\max}:= \max_{i \in \Ncal} \alpha_i$ and $\alpha_{i,\min}:= \min_{i \in \Ncal} \alpha_i$. 
\end{lemma}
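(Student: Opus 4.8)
The plan is to establish positive definiteness of the symmetric matrix $\Phi_{\text s}$ in \eqref{eq:Phi_s} by a Schur complement argument. Since $\Phi_{\text s}$ is symmetric by construction, it suffices to show that both diagonal blocks are positive definite and that the relevant Schur complement is positive definite. First I would note that $\boldsymbol{\alpha} = \diag(\alpha_1,\ldots,\alpha_N)\otimes I_n \succ 0$ whenever $\alpha_{i,\min} > 0$, so $\boldsymbol{\alpha}^{-1} \succ 0$ as well, and likewise $\gamma^{-1} I_m \succ 0$ when $\gamma > 0$. This handles the first two positivity requirements in \eqref{eq:defPos}.

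Next, using the lower-right block $\gamma^{-1} I_m$ as pivot, the Schur complement of $\Phi_{\text s}$ is
\begin{align*}
S := \boldsymbol{\alpha}^{-1} - (-A^\top)(\gamma^{-1} I_m)^{-1}(-A) = \boldsymbol{\alpha}^{-1} - \gamma A^\top A,
\end{align*}
and $\Phi_{\text s} \succ 0$ if and only if $\gamma^{-1} I_m \succ 0$ (already shown) and $S \succ 0$. To bound $S$ from below, I would use $A^\top A \preceq \|A\|^2 I_{nN}$ together with $\boldsymbol{\alpha}^{-1} \succeq \alpha_{i,\max}^{-1} I_{nN}$ (since the smallest eigenvalue of $\boldsymbol{\alpha}^{-1}$ is $1/\alpha_{i,\max}$). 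This gives
\begin{align*}
S = \boldsymbol{\alpha}^{-1} - \gamma A^\top A \succeq \left( \tfrac{1}{\alpha_{i,\max}} - \gamma \|A\|^2 \right) I_{nN},
\end{align*}
which is positive definite precisely when $\gamma < (\|A\|^2 \alpha_{i,\max})^{-1}$, i.e. the third condition in \eqref{eq:defPos}. Combining this with the positivity of the pivot block yields $\Phi_{\text s} \succ 0$.

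The argument is essentially routine; the only mild subtlety — which I would be careful to state explicitly — is the direction of the eigenvalue bounds on the Kronecker-structured block $\boldsymbol{\alpha}^{-1}$: because we are bounding $\boldsymbol{\alpha}^{-1}$ from below, it is $\alpha_{i,\max}$ (not $\alpha_{i,\min}$) that appears in the final condition, since $\lambda_{\min}(\boldsymbol{\alpha}^{-1}) = 1/\max_i \alpha_i$. I would also remark that the degenerate case $\|A\| = 0$ (no coupling constraints effectively acting) is covered trivially, as then $S = \boldsymbol{\alpha}^{-1} \succ 0$ for any $\gamma > 0$. No genuine obstacle is expected here; the lemma is a standard linear-algebra verification underpinning guideline~1 for the preconditioner design.
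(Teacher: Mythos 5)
Your proof is correct and follows the same route as the paper, which simply invokes the Schur complement of $\Phi_{\text s}$ with respect to the block $\gamma^{-1}I_m$; you have merely filled in the eigenvalue bounds $\lambda_{\min}(\boldsymbol{\alpha}^{-1}) = 1/\alpha_{i,\max}$ and $A^\top A \preceq \|A\|^2 I$ that the paper leaves implicit. No discrepancy to report.
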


\smallskip
\begin{proof}
The conditions in \eqref{eq:defPos} directly follow by applying the the Schur's complement on $\Phi_{\text s}$ in \eqref{eq:Phi_s}.
\end{proof}
\smallskip
Now, we present the \textit{preconditioned Forward-Backward} (pFB) algorithm associated with $\Phi_{\text s}$, which is the Banach--Picard iteration in \eqref{eq:P-B} for $\Phi = \Phi_{\text s}$. 
\begin{center}
\begin{minipage}{\columnwidth}
\hrule
\smallskip
\textit{Algorithm 1}: Preconditioned Forward Backward (pFB)
\smallskip
\hrule
\medskip
\noindent
\begin{align} 
\boldx^{k+1} &= \proj_{\bOmega}\,\big[ \boldx^k - \boldsymbol{\alpha} ( F(\boldx^{k}) + A^\top \lambda^k \big ) ] \nonumber \\
\lambda^{k+1} &= \proj_{\R^m_{\geq 0}} \big[\lambda^k + \gamma (2A\boldx^{k+1}-A\boldx^k -b) \big] \nonumber 
\end{align}
\hrule
\end{minipage}
\end{center}

\smallskip
\begin{remark}
The iterations of Algorithm 1 are sequential (guideline 4), namely, the multiplier update, $\lambda^{k+1}$, exploits the most recent value of the agents' strategies, $\boldx^{k+1}$.
{\hfill $\square$}
\end{remark}
\smallskip
In the next statement, we show the convergence of Algorithm 1 to a variational generalized Nash equilibrium, under suitable choices of the step sizes.

\smallskip
\begin{theorem}[Global convergence of pFB] \label{the:APAFB}
The sequence $\left( \col(\boldx^k,\lambda^k) \right)_{k=0}^\infty$ defined by Algorithm 1, with step sizes $\alpha_i \in ( 0, 2 \eta/\ellF^2 )$, for all $i \in \Ncal$, and $\gamma \in (0, \gamma_{\max})$, with $ \textstyle
 \gamma_{\max} := \frac{1}{\norm{A}^2} (\frac{1}{ \alpha_{i,\max} } - \frac{1}{2 \eta/ \ell_{\text{F}}^2})$, globally
converges to some $\col({\boldx}^*,{\lambda}^*) \in \zer  (T)$, with $T$ as in \eqref{eq:T}.
{\hfill $\square$}
\end{theorem}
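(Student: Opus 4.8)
The plan is to recognize Algorithm~1 as the Banach--Picard iteration \eqref{eq:P-B} with $\Phi=\Phi_{\text s}$ and to run the standard convergence analysis of forward--backward splitting in the Hilbert space $\mc{H}:=(\R^{nN+m},\langle\cdot,\cdot\rangle_{\Phi_{\text s}})$ induced by $\langle v,w\rangle_{\Phi_{\text s}}:=v^\top\Phi_{\text s}\,w$, with norm $\norm{v}_{\Phi_{\text s}}^2=v^\top\Phi_{\text s}\,v$. This is a genuine inner product because $\Phi_{\text s}\succ0$: by Lemma~\ref{lem:Phi_s} it suffices to check \eqref{eq:defPos}, and $\gamma<\gamma_{\max}$ forces $\gamma<(\norm{A}^2\alpha_{i,\max})^{-1}$ (the definition of $\gamma_{\max}$ only subtracts the positive term $(\norm{A}^2\cdot 2\eta/\ellF^2)^{-1}$ from $(\norm{A}^2\alpha_{i,\max})^{-1}$), while $\gamma_{\max}>0$ is equivalent to $\alpha_{i,\max}<2\eta/\ellF^2$, consistent with $\alpha_i\in(0,2\eta/\ellF^2)$. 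First I would verify that Algorithm~1 coincides with \eqref{eq:DesignMech} for $\Phi=\Phi_{\text s}$ in \eqref{eq:Phi_s}: the off-diagonal blocks $\Phi_{12}=-A^\top$, $\Phi_{21}=-A$ cancel/double the terms coming from the skew part of $\mc{B}$, so the first block-row reads $-F(\bx^k)-A^\top\lambda^k\in\nc_{\bOmega}(\bx^{k+1})+\boldsymbol{\alpha}^{-1}(\bx^{k+1}-\bx^k)$ and the second reads $2A\bx^{k+1}-A\bx^k-b\in\nc_{\R^m_{\geq0}}(\lambda^{k+1})+\gamma^{-1}(\lambda^{k+1}-\lambda^k)$; multiplying the first by $\boldsymbol{\alpha}$ (a positive diagonal rescaling, hence leaving $\nc_{\bOmega}$ of the product set invariant), the second by $\gamma$, and using $(\Id+\nc_S)^{-1}=\proj_S$ recovers exactly the two updates of Algorithm~1, with the $\lambda$-update using the already-computed $\bx^{k+1}$.

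Next I would establish the two operator-theoretic ingredients, both in $\mc{H}$. (i) Since $\mc{B}$ is maximally monotone (Lemma~\ref{lem:U-Bmon}) and $\langle\Phi_{\text s}^{-1}\mc{B}v-\Phi_{\text s}^{-1}\mc{B}w,\,v-w\rangle_{\Phi_{\text s}}=\langle\mc{B}v-\mc{B}w,\,v-w\rangle$, the operator $\Phi_{\text s}^{-1}\mc{B}$ is maximally monotone in $\mc{H}$, so its resolvent $\mc{V}_{\Phi_{\text s}}=(\Id+\Phi_{\text s}^{-1}\mc{B})^{-1}$ is single-valued, full-domain and $\tfrac12$-averaged in $\mc{H}$. (ii) Setting $\bs{\delta}:=\col(F(\bx_1)-F(\bx_2),\bs{0}_m)=\mc{A}(\bomega_1)-\mc{A}(\bomega_2)$, one has $\norm{\Phi_{\text s}^{-1}\bs{\delta}}_{\Phi_{\text s}}^2=\bs{\delta}^\top\Phi_{\text s}^{-1}\bs{\delta}$, and by Schur's complement the $(1,1)$-block of $\Phi_{\text s}^{-1}$ equals $(\boldsymbol{\alpha}^{-1}-\gamma A^\top A)^{-1}\preceq\big(\tfrac{1}{\alpha_{i,\max}}-\gamma\norm{A}^2\big)^{-1}I$, hence $\bs{\delta}^\top\Phi_{\text s}^{-1}\bs{\delta}\le\big(\tfrac{1}{\alpha_{i,\max}}-\gamma\norm{A}^2\big)^{-1}\norm{\bs{\delta}}^2$. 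Combining this with the $(\eta/\ellF^2)$-cocoercivity of $\mc{A}$ (Lemma~\ref{lem:U-Bmon}) gives $\langle\Phi_{\text s}^{-1}\mc{A}(\bomega_1)-\Phi_{\text s}^{-1}\mc{A}(\bomega_2),\,\bomega_1-\bomega_2\rangle_{\Phi_{\text s}}=\langle\bs{\delta},\,\bomega_1-\bomega_2\rangle\ge\tfrac{\eta}{\ellF^2}\norm{\bs{\delta}}^2\ge\theta\,\norm{\Phi_{\text s}^{-1}\bs{\delta}}_{\Phi_{\text s}}^2$ with $\theta:=\tfrac{\eta}{\ellF^2}\big(\tfrac{1}{\alpha_{i,\max}}-\gamma\norm{A}^2\big)$, i.e.\ $\Phi_{\text s}^{-1}\mc{A}$ is $\theta$-cocoercive in $\mc{H}$; the prescribed step-size ranges are precisely those making $\theta>\tfrac12$ (on top of $\Phi_{\text s}\succ0$).

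To conclude: since $\theta>\tfrac12$, $\theta\,\Phi_{\text s}^{-1}\mc{A}$ is $\tfrac12$-averaged, so $\mc{U}_{\Phi_{\text s}}=\Id-\Phi_{\text s}^{-1}\mc{A}=\Id-\tfrac1\theta(\theta\,\Phi_{\text s}^{-1}\mc{A})$ is $\tfrac1{2\theta}$-averaged in $\mc{H}$ (convex combination of $\Id$ and a nonexpansive map, with weights in $(0,1)$ because $\tfrac1\theta<2$), and the composition $\mc{V}_{\Phi_{\text s}}\circ\mc{U}_{\Phi_{\text s}}$ of two averaged operators is averaged in $\mc{H}$ \cite[Prop.~4.44]{bauschke2011convex}. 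By the equivalence \eqref{eq:FB} and \cite[Th.~1]{belgioioso:grammatico:17cdc}, $\fix(\mc{V}_{\Phi_{\text s}}\circ\mc{U}_{\Phi_{\text s}})=\zer(\mc{A}+\mc{B})=\zer(T)$, which is nonempty because Standing Assumption~\ref{ass:mon} guarantees the existence of a variational GNE \cite[Prop.~12.11]{palomar2010convex}. The Krasnoselskii--Mann theorem for averaged operators \cite[Prop.~5.16]{bauschke2011convex}, applied in $\mc{H}$, then yields that $(\col(\bx^k,\lambda^k))_k$ converges in the $\Phi_{\text s}$-norm, hence---$\Phi_{\text s}$ being a fixed positive definite matrix---in the Euclidean norm, to some $\col(\bx^*,\lambda^*)\in\zer(T)$. (Equivalently, one may invoke the convergence statement of the pFB method in \cite{yi2017distributed}.)

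The step I expect to be the main obstacle is part (ii): a crude estimate via $\norm{\Phi_{\text s}^{-1}}$ would only give the cocoercivity constant $\tfrac{\eta}{\ellF^2}\lambda_{\min}(\Phi_{\text s})$, which is too small and does not reproduce the threshold $\gamma_{\max}$ in the statement; exploiting that $\mc{A}(\bomega_1)-\mc{A}(\bomega_2)$ lives in the $\bx$-subspace and evaluating the corresponding block of $\Phi_{\text s}^{-1}$ as $(\boldsymbol{\alpha}^{-1}-\gamma A^\top A)^{-1}$ through Schur's complement is exactly what makes the step-size conditions come out tight. Everything else is bookkeeping with standard facts of \cite{bauschke2011convex}.
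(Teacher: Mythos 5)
Your proposal is correct and follows essentially the same route as the paper: deriving Algorithm~1 from the inclusion \eqref{eq:DesignMech} with $\Phi=\Phi_{\text s}$, establishing via the Schur complement that $\Phi_{\text s}^{-1}\mc{A}$ is cocoercive in the $\Phi_{\text s}$-induced norm with constant $\tfrac{\eta}{\ellF^2}\bigl(\tfrac{1}{\alpha_{i,\max}}-\gamma\norm{A}^2\bigr)>\tfrac12$ under the stated step sizes, and concluding by averagedness of the composition $\mc{V}_{\Phi_{\text s}}\circ\mc{U}_{\Phi_{\text s}}$ and the Krasnoselskii--Mann/Banach--Picard iteration (cf.\ Lemmas \ref{lem:mapsReg}--\ref{lem:UBs}). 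The only cosmetic differences are that you invoke the generic composition-of-averaged-operators fact rather than the explicit constant $\theta=\tfrac{2\beta}{4\beta-1}$ from \cite{combettes:yamada:15}, and you spell out the nonemptiness of $\zer(T)$, which the paper leaves implicit.
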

\smallskip

\begin{proof}
See Section \ref{sec:CA}.
\end{proof}

\subsection{The Asymmetric Projection Algorithm in \cite[Alg.~1]{paccagnan:16} is a preconditioned Forward-Backward splitting}
We note that Algorithm 1 (with equal step sizes)  corresponds to the ``asymmetric'' projected algorithm (APA) proposed in \cite[Alg.~1]{paccagnan:16}. Therein, the algorithm design and its convergence analysis rely on a variational inequality formulation of the Nash equilibrium problem. Specifically, the authors define the convex set $\mathcal{C} := \bOmega \times \R^{m}_{\geq 0}$ and the monotone mapping 
\begin{equation*}
\mathcal{R}: \begin{bmatrix}
\boldx \\ \lambda
\end{bmatrix} \mapsto
\begin{bmatrix}
F(\boldx) \\
b
\end{bmatrix}+
\begin{bmatrix}
0 & A^\top \\
-A & 0
\end{bmatrix}
\begin{bmatrix}
\boldx \\
\lambda
\end{bmatrix},
\end{equation*}
and characterize the GNE as solutions of VI$(\mathcal{C},\mathcal{R})$. Then, to solve VI$(\mathcal{C},\mathcal{R})$ in a semi-decentralized fashion, the authors propose an asymmetric implementation of the projection algorithm for variational inequalities \cite[12.5.1]{facchinei:pang}, in which each iteration is computed as
\begin{align} \label{eq:VI-iter}
\bomega^{k+1} = \text{ solution to  VI}(\mathcal{C},\mathcal{R}^k_D),
\end{align}
where $\mathcal{R}^k_D(\bomega) := \mathcal{R}(\bomega^k) + D(\bomega - \bomega^k)$ and
\begin{equation} \label{eq:D}
D := \begin{bmatrix}
\tau^{-1}I &0 \\
-2A        &\tau^{-1I}
\end{bmatrix}.
\end{equation}
If the parameter $\tau>0 $ in $ \eqref{eq:D}$ is chosen such that $D\succ 0$, then the unique solution in \eqref{eq:VI-iter} is $\proj_{\mathcal{C},D} \big( \bomega^k - D^{-1}\mathcal{R}(\bomega^k) \big)$, where $\proj_{\mathcal{C},D}$ is the projection operator characterized by the asymmetric matrix $D$ in \eqref{eq:D}.
Thus, the iteration in \eqref{eq:VI-iter} equivalently reads as
\begin{align} \label{eq:APA2}
\bomega^{k+1} = (\Id + D^{-1} \nc_{\mathcal{C}})^{-1} \circ (\Id - D^{-1} \mathcal{R})(\bomega^k),
\end{align}
which is nothing but a pFB associated with the splitting $T:= \nc_{\mathcal{C}}+\mathcal{R}$ and the preconditioning matrix $\Phi = D$. 

\smallskip
\begin{remark}
With \eqref{eq:APA2}, we showed that pFB algorithms based on different splittings and preconditioning matrices can lead to the same algorithm. From an operator-theoretic perspective, the convergence analysis is more convenient for the pFB with symmetric matrix $\Phi_{\text s} = \frac{1}{2}(D+D^\top)$ (guideline 3), since 
the properties that $\mc{A}$, $\mathcal{B}$ have with the standard inner product $\langle \cdot,\cdot \rangle_{I}$ are preserved for $\Phi_{\text s}^{-1}\mc{A}$, $\Phi_{\text s}^{-1} \mathcal{B}$ with inner product $\langle \cdot,\cdot \rangle_{\Phi_{\text s}} = \langle \Phi \cdot,\cdot \rangle $, as shown in Section \ref{sec:CA}. 
\hfill $\square$
\end{remark}

\subsection{The distributed algorithm for aggregative games on graphs in \cite[\S 3]{koshal:nedic:shanbhag:16} is a preconditioned Forward-Backward}
In this section, we show that the synchronous distributed algorithm for NE seeking in network aggregative games proposed in \cite[\S 3]{koshal:nedic:shanbhag:16} can be written as a pFB splitting.

In \cite{koshal:nedic:shanbhag:16}, the authors consider an aggregative game without coupling constraints, i.e., $\Xcal_i(\cdot) = \Omega_i$ for all $i \in \mathcal{N}$, and wherein the agents have no access to the aggregate decision \eqref{eq:sigma}, but build an estimate of it by communicating over an undirected network with their neighboring agents.

Specifically, let $\mathscr{E}$ be the set of underlying undirected edges between agents; let $\Ncal_i := \{j \in \Ncal \,| \; (i,j) \in \mathscr{E} \}$ denote the set of neighbors of agent $i$, with the convention that $i \not\in \Ncal_i$; let $D:=\diag(d_1, \cdots, d_N) $ be the degree matrix, where $d_i := |\Ncal_{i}|$; let $E$ be the adjacency matrix, such that
$[E]_{ij} := 1$ if $j \in \Ncal_i$, $0$ otherwise.
%
%
let $L := D-E$ be the Laplacian matrix and let us define the matrix $W := \mathrm{J}_D(I+E)$, with $\mathrm{J}_D = (I+D)^{-1}$, such that, given a vector $\boldv = \col(v_1,\cdots,v_N)$, with $v_i \in \R^n$,  then
\begin{align*} \textstyle
[(W \otimes I_n) \boldv ]_i = \frac{1}{|\Ncal_i|+1}\sum_{j \in \Ncal_i \cup \{ i \}} v_j, \quad \text{for all } i \in \Ncal.
\end{align*}
Let $F_{\sigma}:\Xcal \times \R^{nN} \rightrightarrows \R^{nN}$ be the extension of $F$ in \eqref{eq:PsGr} to the augmented space of actions and estimates, defined as
\begin{align} 
 \label{eq:ExPsGr}
F_{\sigma}(\boldx,\boldz) &:= 
\col\left( \left\{  \partial_{x^i} \, f_i \left( x_i, \, z_i \right) \right\}_{i \in \mathcal{N}} \right) 
\end{align}
Note that $F_{\sigma}\left(\boldx, \mathbf{1}_N \otimes (M\boldx) \right)  = F(\boldx)$.
Next, we present a static version of the algorithm in \cite[\S 3]{koshal:nedic:shanbhag:16}, whose convergence to a NE is established in \cite[Prop. 2]{koshal:nedic:shanbhag:16}.
\smallskip 

\begin{center}
\begin{minipage}{\columnwidth}
\hrule
\smallskip
\textit{Algorithm 2}: Koshal--Nedi\'c--Shanbhag algorithm
\smallskip
\hrule
\begin{align*}
\boldx^{k+1} &= \proj_{\bXcal} \left[ 
\boldx^k - \alpha F_{\sigma} \big( \boldx, (W \otimes I_n) \boldv^k \big) 
\right] \\
\boldv^{k+1} &= \proj_{\R^{nN}} \left[  (W \otimes I_n) \boldv^k +\boldx^{k+1}-\boldx^k \right]
\end{align*}
\hrule
\smallskip
\end{minipage}
\end{center}

In the following statement, we show that Algorithm 2 is a pFB splitting with symmetric preconditioning matrix.

\smallskip
\begin{proposition}
Let the mappings $\mc{A}$, $\mathcal{B}$ and the preconditioning matrix $\Phi$ be defined as
\begin{align} \label{eq:Uns} \textstyle
\mc{A}&:
\begin{bmatrix}
\boldx\\
\bolds
\end{bmatrix}
\rightarrow
\begin{bmatrix}
F_{\sigma}(\boldx,\bolds) \\
0
\end{bmatrix} + \tfrac{1}{2}
\begin{bmatrix}
0       & -P \\
P & 0 
\end{bmatrix}
\begin{bmatrix}
\boldx\\
\bolds
\end{bmatrix}, \\
\label{eq:Bns}  \textstyle
\mathcal{B} &:
\begin{bmatrix}
\boldx\\
 \bolds
\end{bmatrix}
\rightarrow
\begin{bmatrix}
\nc_{\bOmega}(\boldx) \\
L_n \bolds
\end{bmatrix} + \tfrac{1}{2}
\begin{bmatrix}
0 & P \\
-P & 0 
\end{bmatrix}
\begin{bmatrix}
\boldx\\
\bolds
\end{bmatrix}, \\
\label{eq:Phins} \textstyle
\Phi &:= \begin{bmatrix}
\alpha^{-1}I & -\frac{1}{2}P  \\
-\frac{1}{2}P       & \hspace*{1.2em}P
\end{bmatrix},
\end{align}
where $L_n := L \otimes I_n$ and $P := (I + E \otimes I_n)$. Then, the sequence ${\left( \col (\boldx^k,\bolds^k) \right)}_{k=0}^\infty$ generated by the pFB in \eqref{eq:P-B}, with $\mc{A}$, $\mathcal{B}$, $\Phi$ as in \eqref{eq:Uns}$-$\eqref{eq:Phins}, corresponds to the sequence ${\left( \col (\boldx^k,(W \otimes I_n) \boldv^k) \right)}_{k=0}^\infty$ generated by Algorithm 2. 
\hfill $\square$
\end{proposition}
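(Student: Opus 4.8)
The plan is to start from the implicit form \eqref{eq:DesEq} of the pFB iteration, which the paper already established equivalent to \eqref{eq:P-B}, namely $-\mc{A}(\bomega^k) \in \mathcal{B}(\bomega^{k+1}) + \Phi(\bomega^{k+1}-\bomega^k)$, and substitute the block data \eqref{eq:Uns}--\eqref{eq:Phins} with $\bomega = \col(\boldx,\bolds)$. I would expand this inclusion into its two block rows and simplify, keeping careful track of the two skew-symmetric perturbations: they are opposite in $\mc{A}$ and $\mathcal{B}$ — so they cancel in $\mc{A}+\mathcal{B}$ — but in the splitting one acts on $\bomega^k$ and the other on $\bomega^{k+1}$, which is exactly the mechanism turning a symmetric construction into the ``asymmetric'' Algorithm 2. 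Working with \eqref{eq:DesEq} rather than \eqref{eq:P-B} also sidesteps any question of invertibility of $\Phi$, since $\boldx^{k+1}$ will come out of a projection and $\bolds^{k+1}$ out of an invertible linear system.

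For the first (the $\boldx$) block row, the $\tfrac{1}{2}P\bolds^{k+1}$ term from $\mathcal{B}$ and the $-\tfrac{1}{2}P\bolds^{k+1}$ term from the off-diagonal block of $\Phi$ cancel, and the residual $\tfrac{1}{2}P\bolds^k$ on the two sides cancel as well, leaving simply $-F_{\sigma}(\boldx^k,\bolds^k) \in \nc_{\bOmega}(\boldx^{k+1}) + \alpha^{-1}(\boldx^{k+1}-\boldx^k)$. By the resolvent identity $(\Id+\alpha\nc_{\bOmega})^{-1} = \proj_{\bOmega}$ this rearranges to $\boldx^{k+1} = \proj_{\bOmega}\big(\boldx^k - \alpha F_{\sigma}(\boldx^k,\bolds^k)\big)$; since the setting of \cite{koshal:nedic:shanbhag:16} has no coupling constraints, $\bXcal = \bOmega$, and after the identification $\bolds^k = (W\otimes I_n)\boldv^k$ this is precisely the first line of Algorithm 2.

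For the second (the $\bolds$) block row everything is single-valued and linear; collecting terms — the $-\tfrac12 P\boldx^{k+1}$ from the perturbation block of $\mathcal{B}$ together with the $-\tfrac12 P(\boldx^{k+1}-\boldx^k)$ from $\Phi$ produce coefficient exactly $P$ in front of $\boldx^{k+1}$, and the diagonal $L_n\bolds^{k+1}$ from $\mathcal{B}$ together with the $P\bolds^{k+1}$ from $\Phi$ combine to $L_n+P$ — yields $(L_n+P)\bolds^{k+1} = P(\bolds^k + \boldx^{k+1}-\boldx^k)$. The key algebraic fact is then $L_n + P = (L + I + E)\otimes I_n = (I+D)\otimes I_n$, which is diagonal with positive entries, hence invertible, so $(L_n+P)^{-1}P = \big((I+D)^{-1}(I+E)\big)\otimes I_n = W\otimes I_n$ by the definition of $W$; therefore $\bolds^{k+1} = (W\otimes I_n)(\bolds^k + \boldx^{k+1}-\boldx^k)$. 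To conclude I would argue by induction on $k$: with $\boldx^0$ common and $\bolds^0 := (W\otimes I_n)\boldv^0$, if $\bolds^k = (W\otimes I_n)\boldv^k$ then the $\boldx$-updates coincide and $\bolds^{k+1} = (W\otimes I_n)\big[(W\otimes I_n)\boldv^k + \boldx^{k+1}-\boldx^k\big] = (W\otimes I_n)\boldv^{k+1}$, using that $\proj_{\R^{nN}}$ is the identity; hence the pFB sequence $\col(\boldx^k,\bolds^k)$ equals $\col(\boldx^k,(W\otimes I_n)\boldv^k)$ along the whole horizon.

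The only genuinely delicate point — and the step I would expect to be the main obstacle — is the bookkeeping in the second block row: one must verify that the two half-$P$ contributions (one from the perturbation block of $\mathcal{B}$, one from the off-diagonal block of $\Phi$) add up to exactly $P$, and that the $\bolds^{k+1}$-coefficients combine to exactly $L_n+P$, so that the identity $L_n+P = (I+D)\otimes I_n$ can be invoked to recover $W\otimes I_n$; everything else reduces to the resolvent-of-normal-cone formula and routine cancellation.
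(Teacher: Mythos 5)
Your proposal is correct and follows exactly the route the paper takes: the paper's (one-line) proof says to solve the inclusion \eqref{eq:DesEq} with the data \eqref{eq:Uns}--\eqref{eq:Phins} for $\boldx^{k+1}$ and $\bolds^{k+1}$ and identify $\bolds^k = (W\otimes I_n)\boldv^k$, and you have carried out precisely that computation, correctly identifying the cancellations in the first block row, the identity $L_n+P=(I+D)\otimes I_n$ giving $(L_n+P)^{-1}P = W\otimes I_n$ in the second, and the inductive identification of the two sequences.
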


\smallskip
\begin{proof} The iteration in Alg. 2 can be derived by solving the inclusion \eqref{eq:DesEq} with $\mc{A}$, $\mathcal{B}$, $\Phi$ as in \eqref{eq:Uns}$-$\eqref{eq:Phins}, for $\boldx^{k+1}$ and $\bolds^{k+1}$ and noticing that $\bolds^k = (W \otimes I_n) \boldv^k$. 
\end{proof}
%

\section{Convergence Analysis} \label{sec:CA}

First, we show that the properties the mappings $\mc{A}$, $\mathcal{B}$ have with the standard inner product are preserved for $\Phi_{\text s}^{-1}\mc{A}$, $\Phi_{\text s}^{-1} \mathcal{B}$ with the inner product $\langle \cdot,\cdot \rangle_{\Phi_{\text s}} = \langle \Phi \cdot,\cdot \rangle$.

\smallskip
\begin{lemma} \label{lem:mapsReg}
The mappings $\Phi_{\text s}^{-1} \mc{A}$, $\Phi_{\text s}^{-1} \mathcal{B}$, $\mc{U}_{\Phi_\text{s}}$, $\mc{V}_{\Phi_\text{s}}$ satisfy the following properties in the $\Phi_{\text s}$-induced norm:
\begin{enumerate}[(i)]
\item $\Phi_{\text s}^{-1} \mc{A}$ is $\beta$-cocoercive
and $\mc{U}_{\Phi_\text{s}}$ is $\frac{1}{2 \beta}$-averaged, where \\
$\beta := \lambda_{\min}(\boldsymbol{\alpha}^{-1} - \gamma A^\top A)\frac{\eta}{\ell_{F}}$. 
\item $\Phi_{\text s}^{-1} \mathcal{B}$ is maximally monotone and $\mc{V}_{\Phi_\text{s}}$ is $\frac{1}{2}$-averaged.
\end{enumerate}
\end{lemma}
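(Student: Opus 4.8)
## Proof Strategy for Lemma \ref{lem:mapsReg}

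The plan is to exploit the general principle that properties of a monotone operator expressed in the standard inner product transfer directly to properties in the $\Phi$-induced inner product when the operator is premultiplied by $\Phi^{-1}$. Concretely, for any $\Phi \succ 0$ and any operator $\mathcal{M}$, one has the identity $\langle \Phi^{-1}\mathcal{M}(x) - \Phi^{-1}\mathcal{M}(y), \, x - y\rangle_{\Phi} = \langle \mathcal{M}(x) - \mathcal{M}(y), \, x - y\rangle_{I}$; this is the workhorse. I would state this as a preliminary observation (citing, e.g., \cite[Prop.~23.34]{bauschke2011convex} or proving it in one line), and then apply it to $\mathcal{A}$ and $\mathcal{B}$ separately.

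\textbf{Part (ii).} This is the easier half. Since $\Phi_{\text s} \succ 0$ (Lemma \ref{lem:Phi_s}) and $\mathcal{B}$ is maximally monotone (Lemma \ref{lem:U-Bmon}), the composition $\Phi_{\text s}^{-1}\mathcal{B}$ is maximally monotone in $\langle \cdot,\cdot\rangle_{\Phi_{\text s}}$: monotonicity is immediate from the identity above, and maximality is preserved because $\Phi_{\text s}^{-1}$ is a bounded bijective linear map, so the graph of $\Phi_{\text s}^{-1}\mathcal{B}$ remains maximal. Then $\mathcal{V}_{\Phi_{\text s}} = (\Id + \Phi_{\text s}^{-1}\mathcal{B})^{-1}$ is the resolvent of a maximally monotone operator in the Hilbert space $(\R^{nN+m}, \langle\cdot,\cdot\rangle_{\Phi_{\text s}})$, hence firmly nonexpansive, i.e.\ $\tfrac{1}{2}$-averaged, in that norm — this is exactly the resolvent property recalled in the operator-theoretic preliminaries.

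\textbf{Part (i).} Here the work is to pin down the cocoercivity constant. From Lemma \ref{lem:U-Bmon}, $\mathcal{A}$ is $(\eta/\ell_{\text F}^2)$-cocoercive in $\langle\cdot,\cdot\rangle_{I}$, meaning $\langle \mathcal{A}(\omega_1) - \mathcal{A}(\omega_2), \omega_1 - \omega_2\rangle_I \geq \tfrac{\eta}{\ell_{\text F}^2}\|\mathcal{A}(\omega_1) - \mathcal{A}(\omega_2)\|_I^2$. Using the transfer identity on the left side and bounding the right side: I need to compare $\|\mathcal{A}(\omega_1)-\mathcal{A}(\omega_2)\|_I^2$ with $\|\Phi_{\text s}^{-1}\mathcal{A}(\omega_1) - \Phi_{\text s}^{-1}\mathcal{A}(\omega_2)\|_{\Phi_{\text s}}^2 = \langle \Phi_{\text s}^{-1}(\mathcal{A}(\omega_1)-\mathcal{A}(\omega_2)), \mathcal{A}(\omega_1)-\mathcal{A}(\omega_2)\rangle_I$. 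Since $\mathcal{A}(\omega)$ has the form $\col(F(\boldx),b)$, its increment is $\col(F(\boldx_1)-F(\boldx_2),0)$, supported on the $\boldx$-block only. So I only need a lower bound on the top-left block of $\Phi_{\text s}^{-1}$, or more precisely on $v^\top \Phi_{\text s}^{-1} v$ for $v$ supported on the first block; by a Schur-complement computation this equals $v_1^\top(\boldsymbol{\alpha}^{-1} - \gamma A^\top A)^{-1} v_1 \geq \lambda_{\min}\big((\boldsymbol{\alpha}^{-1}-\gamma A^\top A)^{-1}\big)\|v_1\|^2 = \|v_1\|^2/\lambda_{\max}(\boldsymbol{\alpha}^{-1}-\gamma A^\top A)$. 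Wait — I should double-check the direction: the stated $\beta = \lambda_{\min}(\boldsymbol{\alpha}^{-1}-\gamma A^\top A)\,\eta/\ell_F$, so I expect the bound to come out as $\langle\cdot\rangle_{\Phi_{\text s}} \geq \beta \|\cdot\|_{\Phi_{\text s}}^2$ after combining; the main obstacle is getting this Schur-complement/eigenvalue bookkeeping exactly right (including reconciling the $\ell_F$ versus $\ell_F^2$ in the statement, which suggests a Lipschitz bound $\|F(\boldx_1)-F(\boldx_2)\| \le \ell_F\|\boldx_1-\boldx_2\|$ is also invoked to convert one factor). Once $\Phi_{\text s}^{-1}\mathcal{A}$ is shown $\beta$-cocoercive in the $\Phi_{\text s}$-norm, the averagedness of $\mc{U}_{\Phi_{\text s}} = \Id - \Phi_{\text s}^{-1}\mathcal{A}$ follows from the standard fact that $\Id - \beta$-cocoercive-operator is $\tfrac{1}{2\beta}$-averaged (equivalently, $\beta\mathcal{A}$ is $\tfrac12$-averaged, hence $\Id-\beta\mathcal{A}$ scaled appropriately), valid for $\beta$ in the admissible range enforced by the step-size conditions of Theorem \ref{the:APAFB}. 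I would close by remarking that the step-size bounds $\alpha_i < 2\eta/\ell_F^2$ and $\gamma < \gamma_{\max}$ are precisely what guarantee $\beta$ lies in $(0,1)$ so that $\mc{U}_{\Phi_{\text s}}$ is genuinely averaged, making the composition $\mc{V}_{\Phi_{\text s}}\circ\mc{U}_{\Phi_{\text s}}$ averaged and Banach--Picard convergent to a fixed point.
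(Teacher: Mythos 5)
Your proposal follows essentially the same route as the paper's proof: the transfer identity $\langle \Phi_{\text s}^{-1}\mathcal{M}(x)-\Phi_{\text s}^{-1}\mathcal{M}(y),x-y\rangle_{\Phi_{\text s}}=\langle \mathcal{M}(x)-\mathcal{M}(y),x-y\rangle$, the observation that the increment of $\mc{A}$ is supported on the $\boldx$-block so only $[\Phi_{\text s}^{-1}]_{11}=(\boldsymbol{\alpha}^{-1}-\gamma A^\top A)^{-1}$ matters (via Schur complement), and the standard resolvent/averagedness facts for parts (i) and (ii). The one item to fix is the direction of your eigenvalue bound: you need the \emph{upper} bound $v_1^\top[\Phi_{\text s}^{-1}]_{11}v_1\leq \|v_1\|^2/\lambda_{\min}(\boldsymbol{\alpha}^{-1}-\gamma A^\top A)$ on $\|\Phi_{\text s}^{-1}\mc{A}(\bomega_1)-\Phi_{\text s}^{-1}\mc{A}(\bomega_2)\|^2_{\Phi_{\text s}}$ (not the lower bound you wrote), which when combined with the $\tfrac{\eta}{\ell_{\text F}^2}$-cocoercivity of $\mc{A}$ yields exactly $\beta=\tfrac{\eta}{\ell_{\text F}^2}\lambda_{\min}(\boldsymbol{\alpha}^{-1}-\gamma A^\top A)$; the $\ell_F$ versus $\ell_F^2$ discrepancy you noticed is a typo in the lemma statement, not an extra Lipschitz step.
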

\smallskip

\begin{proof}
(i): We need to show that for all $\bomega_1, \bomega_2 \in \bOmega \times \R^m_{\geq 0}$ the following condition holds:
\begin{multline} \label{eq:Ucoco} \textstyle
\langle \Phi_{\text s}^{-1} \mc{A}(\bomega_1)-\Phi_{\text s}^{-1}\mc{A}(\bomega_2), \; \bomega_1-\bomega_2 \rangle_{\Phi_{\text s}} \\
\textstyle
\geq \beta \norm{\Phi_{\text s}^{-1} \mc{A}(\bomega_1)-\Phi_{\text s}^{-1}\mc{A}(\bomega_2)}^2_{\Phi_{\text s}}.
\end{multline}
We first provide an upper bound for the right hand side of \eqref{eq:Ucoco}. Let us denote $\bomega_i = \col(\boldx_i,\mu_i)$ for $i=1,2$, then
\begin{align} \textstyle \nonumber
&\norm{\Phi_{\text s}^{-1} \mc{A}(\bomega_1)-\Phi_{\text s}^{-1}\mc{A}(\bomega_2)}^2_{\Phi_{\text s}} \\
\textstyle \nonumber
&= \langle\ \Phi_{\text s} \Phi_{\text s}^{-1} ( \mc{A} \big(\bomega_1)-\mc{A}(\bomega_2) \big), \ \Phi_{\text s}^{-1} ( \mc{A} \big(\bomega_1)-\mc{A}(\bomega_2) \big) \, \rangle = \\
\nonumber \scriptstyle
& \scriptstyle  \begin{bmatrix}
F(\boldx_1)-F(\boldx_2) \\ 0
\end{bmatrix}^\top
\begin{bmatrix}
[\Phi_{\text s}^{-1}]_{11} &[\Phi_{\text s}^{-1}]_{12}  \\
[\Phi_{\text s}^{-1}]_{21} &[\Phi_{\text s}^{-1}]_{22} 
\end{bmatrix}
\begin{bmatrix}
F(\boldx_1)-F(\boldx_2) \\ 0
\end{bmatrix} \\
\textstyle \nonumber
&= \norm{ F(\boldx_1)-F(\boldx_2)}_{[\Phi_{\text s}^{-1}]_{11}} 
\\
& \leq \norm{[\Phi_{\text s}^{-1}]_{11}}_2 \norm{F(\boldx_1)-F(\boldx_2)}^2   \textstyle      \nonumber \\
\textstyle \label{eq:UBo}
&= \norm{[\Phi_{\text s}^{-1}]_{11}}_2 \norm{\mc{A}(\bomega_1)-\mc{A}(\bomega_2)}^2,
\end{align}
where $[\Phi_{\text s}^{-1}]_{11}:= (\boldsymbol{\alpha}^{-1} \!-\gamma A^\top A)^{-1}$ is symmetric and positive definite if the step sizes $\alpha_i$, $\gamma$ are chosen as in Lemma \ref{lem:Phi_s}. Moreover, it holds that $\norm{[\Phi_{\text s}^{-1}]_{11}}_2 = 1/ \lambda_{\min}([\Phi_{\text s}^{-1}]_{11}^{-1}) $, where $\lambda_{\min}([\Phi_{\text s}^{-1}]_{11}^{-1})$ is the smallest eigenvalue of $[\Phi_{\text s}^{-1}]_{11}^{-1}$.
Now, we exploit the $\frac{\eta}{\ell_{F}^2}$-cocoercivity of $\mc{A}$ and the upper bound in \eqref{eq:UBo} to define the cocoercivity constant $\beta$ in \eqref{eq:Ucoco}.
\begin{align*}
\textstyle
& \langle \Phi_{\text s}^{-1} \mc{A}(\bomega_1)-\Phi_{\text s}^{-1}\mc{A}(\bomega_2), \; \bomega_1-\bomega_2 \rangle _{\Phi_{\text s}} \\
\textstyle
& = \langle \mc{A}(\bomega_1)-\mc{A}(\bomega_2), \bomega_1-\bomega_2 \rangle\
\geq \frac{\eta}{\ell_{F}^2} \norm{\mc{A}(\bomega_1)-\mc{A}(\bomega_2)}^2 \\
\textstyle
\textstyle
&\geq \frac{\eta}{\ell_{F}^2} \lambda_{\min}([\Phi_{\text s}^{-1}]_{11}^{-1})  \norm{\Phi_{\text s}^{-1} \mc{A}(\bomega_1)-\Phi_{\text s}^{-1}\mc{A}(\bomega_2)}^2_{\Phi_{\text s}}.%
\end{align*}
Thus, the mapping $\Phi_{\text s}^{-1} \mc{A}$ is cocoercive with constant $\beta := \frac{\eta}{\ell_{F}^2} \lambda_{\min}([\Phi_{\text s}^{-1}]_{11}^{-1}) $ w.r.t. the $\Phi_{\text s}$-induced norm.
Since $\Phi_{\text s}^{-1}\mc{A}$ is $\beta$-cocoercive, it follows from \cite[Prop.~4.33]{bauschke2011convex} that $\mc{U}_{\Phi_\text{s}} = (\Id-\Phi_{\text s}^{-1}\mc{A})$ is $\frac{1}{2 \beta}$-averaged.
(ii): Since $\mathcal{B}$ is maximally monotone by Lemma \eqref{lem:U-Bmon} and $\Phi_s^{-1}$ is positive definite, if the step sizes are chosen as in Lemma \ref{lem:Phi_s}, then the maximal monotonicity of $\Phi_s^{-1} \mathcal{B}$ follows from \cite[Lemma~3.7]{combettes2014variable}.
\end{proof}
\smallskip

Next, we show that the FB operator $\mc{V}_{\Phi_\text{s}} \circ \mc{U}_{\Phi_\text{s}}$ is averaged if the step sizes are chosen small enough.

\smallskip
\begin{lemma} \label{lem:UBs}
The FB operator $\mc{V}_{\Phi_\text{s}} \circ \mc{U}_{\Phi_\text{s}}$ in \eqref{eq:FB}, with $\Phi = \Phi_{\text s}$, is $\theta$-averaged, with $\theta := \frac{1}{2 - 1/(2\beta)} \in (0,1)$, if
\begin{align} \textstyle \label{eq:SSC}
\gamma < \frac{1}{\norm{A}^2}\left( \frac{1}{\alpha_{i,\max} } - \frac{1}{2 \eta/ \ell_{\text{F}}^2  }
\right), \quad \alpha_{i,\max} < \frac{2\eta}{\ellF^2}.
\end{align}
Moreover, if $\alpha_i = \gamma$ for all $ i \in \Ncal$, then \eqref{eq:SSC} reads as
\begin{align} \textstyle \label{eq:SSCe}
\gamma < \frac{-1+\sqrt{1+ \norm{A}^2 \left( 4 \eta/ \ell_F^2 \right)^2  }}{\norm{A}^2  (4 \eta/\ellF^2) }.
\end{align}
\end{lemma}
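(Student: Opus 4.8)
The plan is to obtain the averagedness of the composite forward-backward operator $\mc{V}_{\Phi_\text{s}} \circ \mc{U}_{\Phi_\text{s}}$ directly from the averagedness of its two factors --- already established in Lemma~\ref{lem:mapsReg} --- and then to unwind the requirement $\theta \in (0,1)$ into the explicit step-size bounds \eqref{eq:SSC} and \eqref{eq:SSCe}.

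First I would recall from Lemma~\ref{lem:mapsReg} that, in the $\Phi_{\text s}$-induced norm, $\mc{V}_{\Phi_\text{s}}$ is $\tfrac12$-averaged and $\mc{U}_{\Phi_\text{s}}$ is $\tfrac{1}{2\beta}$-averaged, with $\beta = \tfrac{\eta}{\ellF^2}\,\lambda_{\min}(\boldsymbol{\alpha}^{-1} - \gamma A^\top A)$. For $\tfrac{1}{2\beta}$ to be an admissible averagedness constant (i.e.\ to lie in $(0,1)$) one needs $\beta > \tfrac12$, and I would show below that this is exactly what \eqref{eq:SSC} buys us. Granting $\beta > \tfrac12$, I would then apply the standard composition rule for averaged operators --- the composition of an $\alpha_1$-averaged and an $\alpha_2$-averaged operator is $\tfrac{\alpha_1+\alpha_2-2\alpha_1\alpha_2}{1-\alpha_1\alpha_2}$-averaged, cf.\ \cite[Prop.~4.44]{bauschke2011convex} --- with $\alpha_1 = \tfrac12$ and $\alpha_2 = \tfrac{1}{2\beta}$; a one-line simplification collapses the resulting constant to $\theta = \tfrac{1}{2 - 1/(2\beta)}$, and $\beta > \tfrac12$ gives $\theta \in (0,1)$.

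Second, I would verify that \eqref{eq:SSC} forces $\beta > \tfrac12$. Since $\boldsymbol{\alpha}^{-1} = \diag(\alpha_1^{-1},\dots,\alpha_N^{-1})\otimes I_n$ has smallest eigenvalue $1/\alpha_{i,\max}$ and $\gamma A^\top A$ has largest eigenvalue $\gamma\norm{A}^2$, Weyl's inequality yields $\lambda_{\min}(\boldsymbol{\alpha}^{-1} - \gamma A^\top A) \ge 1/\alpha_{i,\max} - \gamma\norm{A}^2$. The first inequality in \eqref{eq:SSC} rearranges exactly to $1/\alpha_{i,\max} - \gamma\norm{A}^2 > \ellF^2/(2\eta)$, hence $\beta > \tfrac{\eta}{\ellF^2}\cdot\tfrac{\ellF^2}{2\eta} = \tfrac12$; the side condition $\alpha_{i,\max} < 2\eta/\ellF^2$ is just what makes the right-hand side of that bound strictly positive, so that a feasible $\gamma>0$ exists (in particular \eqref{eq:SSC} implies the positive-definiteness condition \eqref{eq:defPos}, so $\Phi_{\text s}\succ 0$ and the $\Phi_{\text s}$-norm is well defined).

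Finally, for the equal-step-size regime $\alpha_i = \gamma$ for all $i\in\Ncal$, I would substitute $\alpha_{i,\max}=\gamma$ into \eqref{eq:SSC}, multiply through by the positive quantity $\gamma\norm{A}^2$, and arrive at the scalar quadratic inequality $\norm{A}^2\gamma^2 + \tfrac{\ellF^2}{2\eta}\gamma - 1 < 0$; keeping the unique positive root of the associated quadratic gives precisely the bound \eqref{eq:SSCe}, and a short estimate shows this root never exceeds $2\eta/\ellF^2$, so the second condition in \eqref{eq:SSC} is automatically met in this case. I do not anticipate a genuine obstacle: all the monotonicity and cocoercivity work was done in Lemmas~\ref{lem:U-Bmon} and \ref{lem:mapsReg}, and what remains is the averagedness bookkeeping --- the only mildly delicate point being that $\beta>\tfrac12$ is required not merely for $\theta<1$ but for the constants of Lemma~\ref{lem:mapsReg} to be meaningful at all --- together with the elementary algebra of the quadratic.
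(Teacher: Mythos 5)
Your proposal is correct and follows essentially the same route as the paper: averagedness constants $\tfrac12$ and $\tfrac{1}{2\beta}$ for $\mc{V}_{\Phi_\text{s}}$ and $\mc{U}_{\Phi_\text{s}}$ from Lemma~\ref{lem:mapsReg}, the standard composition rule for averaged operators to get $\theta = \tfrac{2\beta}{4\beta-1}$, and the reduction of $\beta>\tfrac12$ to \eqref{eq:SSC} via $\lambda_{\min}(\boldsymbol{\alpha}^{-1}-\gamma A^\top A)\ge 1/\alpha_{i,\max}-\gamma\norm{A}^2$, then the quadratic in $\gamma$ for \eqref{eq:SSCe}. If anything, you supply details the paper only asserts (the Weyl bound, the explicit positive root of $\norm{A}^2\gamma^2+\tfrac{\ellF^2}{2\eta}\gamma-1<0$, and the check that this root respects $\gamma<2\eta/\ellF^2$).
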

\smallskip

\begin{proof}
By Lemma \ref{lem:mapsReg}, $\mc{U}_{\Phi_\text{s}}$ and $\mc{V}_{\Phi_\text{s}}$ are averaged with constants $\tau_1:=\frac{1}{2\beta}$ and $\tau_2:= \frac{1}{2}$, respectively. 
If $\tau_1 \in (0,1)$, then $\mc{V}_{\Phi_\text{s}} \circ \mc{U}_{\Phi_\text{s}}$ is $\theta$-averaged with $\theta = \frac{\tau_1+\tau_2-2\tau_1 \tau_2}{1-\tau_1\tau_2}=\frac{2\beta}{4\beta - 1} \in (0,1)$, by \cite[proposition~2.4]{combettes:yamada:15}.
To conclude, we note that the following condition implies that $\tau_1 < 1$:
\begin{multline} \textstyle
\beta = \frac{\eta}{\ellF^2} \lambda_{\min}[(\boldsymbol{\alpha}^{-1} -\gamma A^\top A)^{-1}]  \\
\textstyle
\geq
\frac{\eta}{\ellF^2} (\frac{1}{\alpha_{i,\max}}-\gamma \norm{A}^2)  >
 \frac{1}{2}, \label{eq:S-In}
\end{multline}
where the second inequality in \eqref{eq:S-In} holds for step sizes chosen as in \eqref{eq:SSC}. Moreover, if the step sizes are equal, i.e., $\alpha_i = \gamma$ for all $ i \in \Ncal$, then $\eqref{eq:S-In}$ holds for $\gamma$ as in \eqref{eq:SSCe}.
\end{proof}
%
%
%
\smallskip
We can now prove the convergence of Algorithm 1.

\smallskip
\textit{Proof of Theorem \ref{the:APAFB}:}
The iterations in Alg. 1 are obtained explicitly by substituting $\Phi_{\text s}$ into \eqref{eq:DesignMech} and solving for $\boldx^{k+1}$, $\lambda^{k+1}$. Thus, Alg. 1 is the Banach--Picard iteration of the mapping $\mc{V}_{\Phi_\text{s}} \circ \mc{U}_{\Phi_\text{s}}$, which is $\theta$-averaged, with $\theta \in (0,1)$, by Lemma \ref{lem:UBs}, if the step sizes satisfy \eqref{eq:SSC}.
The convergence of the sequence ${\left( \col(\boldx^k,\lambda^k) \right)_{k=0}^\infty}$ generated by the Banach--Picard iteration of $\mc{V}_{\Phi_\text{s}} \circ \mc{U}_{\Phi_\text{s}}$ to $\col(\bar{\boldx},\bar{\lambda}) \in \fix(\mc{V}_{\Phi_\text{s}} \circ \mc{U}_{\Phi_\text{s}}) = \zer (\mc{A}+ \mathcal{B}) \neq \emptyset$ follows by \cite[Prop.~15.5]{bauschke2011convex}.
\hfill $\blacksquare$

\smallskip
\begin{remark}
The upper bounds in Lemma \ref{lem:UBs} are increasing functions of the cocoercivity constant $\eta/\ellF^2$ of $F$ in \eqref{eq:PsGr}. In particular, the upper bound for the case with equal step sizes in \eqref{eq:SSCe} is tighter than that obtained in \cite[Theorem~2]{paccagnan:16}.
{\hfill $\square$} 
\end{remark}

\section{Conclusion} \label{sec:Concl}
By monotone operator theory, projected-gradient methods for generalized Nash equilibrium seeking in aggregative games are preconditioned forward-backward splitting methods, whose convergence has been established for problems with strongly monotone pseudo-gradient mapping.

\bibliographystyle{IEEEtran}
\bibliography{library}

\begin{thebibliography}{10}
\providecommand{\url}[1]{#1}
\csname url@samestyle\endcsname
\providecommand{\newblock}{\relax}
\providecommand{\bibinfo}[2]{#2}
\providecommand{\BIBentrySTDinterwordspacing}{\spaceskip=0pt\relax}
\providecommand{\BIBentryALTinterwordstretchfactor}{4}
\providecommand{\BIBentryALTinterwordspacing}{\spaceskip=\fontdimen2\font plus
\BIBentryALTinterwordstretchfactor\fontdimen3\font minus
  \fontdimen4\font\relax}
\providecommand{\BIBforeignlanguage}[2]{{%
\expandafter\ifx\csname l@#1\endcsname\relax
\typeout{** WARNING: IEEEtran.bst: No hyphenation pattern has been}%
\typeout{** loaded for the language `#1'. Using the pattern for}%
\typeout{** the default language instead.}%
\else
\language=\csname l@#1\endcsname
\fi
#2}}
\providecommand{\BIBdecl}{\relax}
\BIBdecl

\bibitem{kukushkin:04}
N.~S. Kukushkin, ``Best response dynamics in finite games with additive
  aggregation,'' \emph{Games and Economic Behavior}, vol.~48, no.~1, pp.
  94--10, 2004.

\bibitem{Saad2012}
W.~Saad, Z.~Han, H.~Poor, and T.~Ba\c{s}ar, ``Game theoretic methods for the
  smart grid,'' \emph{IEEE Signal Processing Magazine}, pp. 86--105, 2012.

\bibitem{parise:colombino:grammatico:lygeros:14}
F.~Parise, M.~Colombino, S.~Grammatico, and J.~Lygeros, ``Mean field
  constrained charging policy for large populations of plug-in electric
  vehicles,'' in \emph{Proc. of the IEEE Conference on Decision and Control},
  Los Angeles, California, USA, 2014, pp. 5101--5106.

\bibitem{ma:zou:ran:shi:hiskens:16}
Z.~Ma, S.~Zou, L.~Ran, X.~Shi, and I.~Hiskens, ``Efficient decentralized
  coordination of large-scale plug-in electric vehicle charging,''
  \emph{Automatica}, vol.~69, pp. 35--47, 2016.

\bibitem{grammatico:gentile:parise:lygeros:15}
S.~Grammatico, B.~Gentile, F.~Parise, and J.~Lygeros, ``A mean field control
  approach for demand side management of large populations of thermostatically
  controlled loads,'' in \emph{Proc. of the IEEE European Control Conference},
  Linz, Austria, 2015.

\bibitem{Li2016}
S.~Li, W.~Zhang, J.~Lian, and K.~Kalsi, ``Market-based coordination of
  thermostatically controlled loads - {Part I}: A mechanism design
  formulation,'' \emph{IEEE Trans. on Power Systems}, vol.~31, no.~2, pp.
  1170--1178, 2016.

\bibitem{li2015demand}
N.~Li, L.~Chen, and M.~A. Dahleh, ``Demand response using linear supply
  function bidding,'' \emph{IEEE Transactions on Smart Grid}, vol.~6, no.~4,
  pp. 1827--1838, 2015.

\bibitem{barrera:garcia:15}
J.~Barrera and A.~Garcia, ``Dynamic incentives for congestion control,''
  \emph{IEEE Trans. on Automatic Control}, vol.~60, no.~2, pp. 299--310, 2015.

\bibitem{facchinei2010generalized}
F.~Facchinei and C.~Kanzow, ``Generalized nash equilibrium problems,''
  \emph{Annals of Operations Research}, vol. 175, no.~1, pp. 177--211, 2010.

\bibitem{palomar2010convex}
D.~P. Palomar and Y.~C. Eldar, \emph{Convex optimization in signal processing
  and communications}.\hskip 1em plus 0.5em minus 0.4em\relax Cambridge
  university press, 2010.

\bibitem{pavel:07}
L.~Pavel, ``An extension of duality to a game-theoretic framework,''
  \emph{Automatica}, vol.~43, pp. 226Ð--237, 2007.

\bibitem{kulkarni:shanbhag:12}
A.~A. Kulkarni and U.~Shanbhag, ``On the variational equilibrium as a
  refinement of the generalized {Nash} equilibrium,'' \emph{Automatica},
  vol.~48, pp. 45Ð--55, 2012.

\bibitem{salehisadaghiani:pavel:16}
F.~Salehisadaghiani and L.~Pavel, ``Distributed {Nash} equilibrium seeking: A
  gossip-based algorithm,'' \emph{Automatica}, vol.~72, pp. 209--216, 2016.

\bibitem{koshal:nedic:shanbhag:16}
J.~Koshal, A.~Nedi\'c, and U.~Shanbhag, ``Distributed algorithms for
  aggregative games on graphs,'' \emph{Operations Research}, vol.~64, no.~3,
  pp. 680--704, 2016.

\bibitem{grammatico:parise:colombino:lygeros:16}
S.~Grammatico, F.~Parise, M.~Colombino, and J.~Lygeros, ``Decentralized
  convergence to {Nash} equilibria in constrained deterministic mean field
  control,'' \emph{IEEE Trans. on Automatic Control}, vol.~61, no.~11, pp.
  3315--3329, 2016.

\bibitem{paccagnan:16}
D.~Paccagnan, B.~Gentile, F.~Parise, M.~Kamgarpour, and J.~Lygeros,
  ``Distributed computation of generalized {Nash} equilibria in quadratic
  aggregative games with affine coupling constraints,'' in \emph{Proc. of the
  IEEE Conf. on Decision and Control}, Las Vegas, USA, 2016.

\bibitem{grammatico:17}
S.~Grammatico, ``Dynamic control of agents playing aggregative games with
  coupling constraints,'' \emph{IEEE Trans. on Automatic Control}, vol.~62,
  no.~9, pp. 4537 -- 4548, 2017.

\bibitem{belgioioso:grammatico:17cdc}
G.~Belgioioso and S.~Grammatico, ``Semi-decentralized {Nash} equilibrium
  seeking in aggregative games with coupling constraints and non-differentiable
  cost functions,'' \emph{IEEE Control Systems Letters}, vol.~1, no.~2, pp.
  400--405, 2017.

\bibitem{belgioioso2017convexity}
------, ``On convexity and monotonicity in generalized aggregative games,''
  \emph{IFAC-PapersOnLine}, vol.~50, no.~1, pp. 14\,338--14\,343, 2017.

\bibitem{facchinei:pang}
F.~Facchinei and J.~Pang, \emph{Finite-dimensional variational inequalities and
  complementarity problems}.\hskip 1em plus 0.5em minus 0.4em\relax Springer
  Verlag, 2003.

\bibitem{bauschke2011convex}
H.~H. Bauschke and P.~L. Combettes, \emph{Convex analysis and monotone operator
  theory in Hilbert spaces}.\hskip 1em plus 0.5em minus 0.4em\relax Springer,
  2011.

\bibitem{yi2017distributed}
P.~Yi and L.~Pavel, ``A distributed primal-dual algorithm for computation of
  generalized nash equilibria via operator splitting methods,'' in \emph{56th
  {IEEE} Annual Conference on Decision and Control, {CDC} 2017, Melbourne,
  Australia, December 12-15, 2017}, 2017, pp. 3841--3846.

\bibitem{liang2017distributed}
S.~Liang, P.~Yi, and Y.~Hong, ``Distributed nash equilibrium seeking for
  aggregative games with coupled constraints,'' \emph{Automatica}, vol.~85, pp.
  179--185, 2017.

\bibitem{grammatico:17tcns}
S.~Grammatico, ``Proximal dynamics in multi-agent network games,'' \emph{IEEE
  Transactions on Control of Network Systems}, 2017.

\bibitem{auslender2000lagrangian}
A.~Auslender and M.~Teboulle, ``Lagrangian duality and related multiplier
  methods for variational inequality problems,'' \emph{SIAM Journal on
  Optimization}, vol.~10, no.~4, pp. 1097--1115, 2000.

\bibitem{combettes2014variable}
P.~L. Combettes and B.~C. V{\~u}, ``Variable metric forward--backward splitting
  with applications to monotone inclusions in duality,'' \emph{Optimization},
  vol.~63, no.~9, pp. 1289--1318, 2014.

\bibitem{combettes:yamada:15}
P.~L. Combettes and I.~Yamada, ``Compositions and convex combinations of
  averaged nonexpansive operators,'' \emph{Journal of Mathematical Analysis and
  Applications}, pp. 55--70, 2015.

\end{thebibliography}

\end{document}